\pgfplotsset{width=7.5cm,compat=1.12}
\newtheorem{theorem}{Theorem}[section]
\newtheorem{corollar}{Corollary}[section]
\newtheorem{lemma}{Lemma}[section]
\newtheorem{proposition}{Proposition}[section]
\newtheorem{remark}{Remark}[section]
\journal{Computers \& Mathematics with Applications \vf{(CAMWA)}}
\begin{document}
%%
%% Start line numbering here if you want
%%
%\begin{linenumbers}

\begin{frontmatter}

%% Title, authors and addresses

\title{Improved error estimates of hybridizable interior penalty methods using a variable penalty for highly anisotropic diffusion problems}

%% use the tnoteref command within \title for footnotes;
%% use the tnotetext command for the associated footnote;
%% use the fnref command within \author or \address for footnotes;
%% use the fntext command for the associated footnote;
%% use the corref command within \author for corresponding author footnotes;
%% use the cortext command for the associated footnote;
%% use the ead command for the email address,
%% and the form \ead[url] for the home page:
%%
%% \title{Title\tnoteref{label1}}
%% \tnotetext[label1]{}
\author[add1]{Gr\'egory Etangsale}
\ead{gregory.etangsale@univ-reunion}
\author[add2]{Marwan Fahs}
\ead{fahs@unistra.fr}
\author[add1]{Vincent Fontaine\corref{cor1}}
\ead{vincent.fontaine@univ-reunion}
\cortext[cor1]{Corresponding and Principal author}
\author[add1]{Nalitiana Rajaonison}
\ead{nalitiana.rajaonison@univ-reunion.fr}
%% \ead[url]{home page}
%% \fntext[label2]{}
%% \address{Address\fnref{label3}}
%% \fntext[label3]{}

\address[add1]{Department of Building and Environmental Sciences, University of La R\'eunion - South Campus, France}
\address[add2]{$Universit\acute{e}$ de Strasbourg, CNRS, ENGEES, LHYGES UMR 7517, F-67000 Strasbourg, France}

\begin{abstract}
%% Text of abstract
In this paper, we derive improved \textit{a priori} error estimates for families of hybridizable interior penalty discontinuous Galerkin (H-IP) methods using a variable penalty for second-order elliptic problems. The strategy is to use a penalization function of the form $\mathcal{O}(1/h^{1+\delta})$, where $h$ denotes the mesh size and $\delta$ is a user-dependent parameter. We then quantify its direct impact on the convergence analysis, namely, the (strong) consistency, discrete coercivity and boundedness (with $h
^{\delta}$-dependency), and we derive updated error estimates for both discrete energy- and $\LL{2}$-norms. The originality of the error analysis relies specifically on the use of conforming interpolants of the exact solution. All theoretical results are supported by numerical evidence.
\end{abstract}

\begin{keyword}
Hybridizable discontinuous Galerkin \sep interior penalty methods \sep variable-penalty technique \sep  convergence analysis \sep updated \textit{a priori} error estimates

\MSC[2020] 65N12 \sep 65N15 \sep 65N30 \sep 65N38
\end{keyword}

\end{frontmatter}

\section{Introduction}
\label{S:1}
Hybridizable discontinuous Galerkin (HDG) methods were first introduced in the last decade by Cockburn \textit{et al.} \cite{Cockburn09unified} (see, \eg \cite{egger2009mixed}) and have since received extensive attention
from the research community. They are popular and very efficient numerical approaches for solving a large class of \vf{linear and nonlinear} partial differential equations. \vf{They are still under development and broadly applied in various scientific topics such as groundwater flows \cite{Fabien2020high,etangsale:hal-03247309}, fluid dynamics \cite{kirk2019analysis}, solid mechanics \cite{SEVILLA201969}, wave propagation \cite{SANCHEZ2017951}, or magneto-hydro-dynamics \cite{Lee2019AnalysisOA}, to name but a few.} Indeed, they inherit attractive features from both (i) discontinuous Galerkin (DG) methods such as local conservation, $hp$-adaptivity and high-order polynomial approximation \cite{arnold2002unified} and (ii) standard conforming Galerkin (CG) methods such as the Schur complement strategy \cite{kirby2012cg}. One undeniable additional benefit of the HDG methods is their superconvergence property, obtained through the application of a local postprocessing technique on each element of the mesh \cite{nguyen2009implicit}. In the hybrid formalism, additional unknowns are introduced along the mesh skeleton corresponding to discrete trace approximations. Thanks to the specific localization of its additional degrees of freedom (dofs) \vf{and the discontinuous nature of approximation spaces}, interior variables can be \vf{locally} eliminated in favor of its Lagrange multipliers by only static condensation. \vf{The problem is then closed, and the algebraic linear system is assembled by imposing \textit{transmission} conditions throughout the mesh skeleton. This strategy is now well-established and -documented in the literature, and we refer the interested reader to the following works for a detailed description \cite{kirby2012cg,nguyen2009implicit,lehrenfeld2010hybrid} (see also, \cite[Remark 4.1]{etangsale:hal-03247309} for the description of the static condensation technique).} The resulting matrix system is significantly smaller and sparser than those associated with CG or DG methods for any given mesh and polynomial degree \cite{kirby2012cg}. Several HDG formulations have been derived in the literature and can be classified into two main categories. The first is based on a primal form of the continuous problem, such as the class of interior penalty (IP) methods \cite{FabienNM2019}, whereas the second relies on a dual (often called mixed) form, such as local discontinuous Galerkin (LDG) methods \cite{Cockburn09unified,nguyen2009implicit,DijouxA2M19}. In the latter formulation, the flux variable is introduced as an additional unknown of the problem.\newline
Our focus is on families of hybridizable interior penalty discontinuous Galerkin (H-IP) methods \cite{wells2011analysis}. They are hybridized counterparts of the well-known interior penalty DG (IPDG) methods \cite{arnold1982interior,Riviere08bookDG,dpe2011mathematical} and have been analyzed until quite recently by several authors \cite{FabienNM2019,kirk2019analysis}. Specifically, in our exposition, we considered the incomplete, non-symmetric and symmetric schemes denoted by H-IIP, H-NIP and H-SIP, respectively. The main difference between these schemes concerns the role of the \textit{symmetrization} term in the discrete bilinear form \cite{Riviere08bookDG}. Fabien \al recently
%Editor: On the line below, please ensure that the intended meaning has been maintained in this edit.
analyzed these schemes
using a stabilization function of the form $\mathcal{O}(1/h)$ for solving second-order elliptic problems \cite{FabienNM2019}. The authors conclude that H-IP methods inherit similar convergence properties to their IPDG equivalents. Notably, they theoretically establish (i) optimal energy error estimates, and because of the lack of symmetry of the associated discrete operator, (ii) only suboptimal $\LL{2}$-norm error estimates for H-IIP and H-NIP schemes. In addition, they numerically conclude that the $\LL{2}$‐orders of convergence of both non-symmetric variants are suboptimal for only even polynomial degrees and are optimal otherwise. Similar conclusions have also been suggested by Oikawa for second-order elliptic problems \cite{oikawa2017hdg}.\newline
To restore optimal $\LL{2}$-error estimates for the nonsymmetric IPDG method, Rivi\`ere \al suggest using a sort of superpenalty on the jumps \cite{Riviere98IPDG,Guzman09JSC}. In the present paper, we explore a similar idea in the general context of H-IP methods by using a variable penalty function of the form $\tau\eqbydef\mathcal{O}(1/h^{1+\delta})$, where $\delta\in\mathbb{R}$. Here, we analyze the direct impact of the parameter $\delta$ on \textit{a priori} error estimates in different norms. First, we propose a convergence analysis by investigating three key properties: (strong) consistency, discrete coercivity and boundedness. One remarkable feature of this strategy is the $h
^{\delta}$-dependency of the coercivity condition and the continuity (or boundedness) constant $\C{bnd}$, which consequently impacts the error estimates. Improved error estimates are then derived in the spirit of the second Strang lemma \cite{dpe2011mathematical}, and we first prove that the order of convergence in the natural energy-norm is linear, $\delta$-dependent, and optimal when $\delta\geq0$ for any scheme. Then, by using a duality argument, \ie the so-called Aubin--Nitsche technique, we also prove that the  optimal convergence is theoretically reached as soon as $\delta\geq 0$ for the H-SIP scheme only, and when $\delta\geq 2$ for both non-symmetric variants, \ie H-NIP and H-IIP schemes. Let us underline that we recover theoretical error estimates proposed in the literature for both the energy- and $\LL{2}$-norms if $\delta=0$.\newline
The rest of the material is organized as follows: Section \ref{S:2} describes the model problem, mesh notation and assumptions, and recalls some definitions and useful (trace) inequalities, while Section \ref{S:3} derives the discrete H-IP formulation and discusses its stability properties. In Section \ref{S:4}, optimal error estimates are provided for both the energy- and $\LL{2}$-norms by using a standard duality argument. Section \ref{S:5} concerns the numerical experiments that validate our theoretical results. We briefly end with some remarks and perspectives.

\section{Some preliminaries}
\label{S:2}
\subsection{The model problem}
Let $\domain$ be a bounded (polyhedron) domain in $\setR^d$ with Lipschitz boundary $\bnddom$ in spatial dimension $d\geq 2$. For clarity, we consider the anisotropic diffusion problem with homogeneous Dirichlet boundary conditions:
\begin{equation}
\label{model_problem}
- \nabla \cdot (\bkap \nabla u) = f\quad\textrm{in }\Omega\quad\textrm{and}\quad u=0\quad \textrm{on }\partial \Omega,
\end{equation}
where $\bkap\in[\lebesgue{\infty}{\domain}]^{d\times d}$ is a bounded, symmetric, uniformly positive-definite matrix-valued function and $f\in \lebesgue{2}{\Omega}$ is a forcing term. Thus, the weak formulation of problem \eqref{model_problem} is to find $u\in\hilbert{1}{\domain}{0}$ such that
\begin{equation}
    \label{weak_problem}
    \int_{\domain}\bkap\grd u\cdot\grd v\dx=\int_{\domain}fv\dx\quad\forall v\in\hilbert{1}{\domain}{0}.
\end{equation}
It is well known that under elliptic regularity assumptions, the variational problem \eqref{weak_problem} is well posed.

\subsection{Mesh notation and assumptions}

Let $h$ be a positive parameter; we assume without loss of generality that $h\leq 1$. We denote by $\{\Th\}_{h>0}$ a family of affine triangulations of the domain $\domain$, where $h$ stands for the largest diameter: $h_{\e{}}\eqbydef\diam{\e{}}$. We also assume that $\Th$ is \textit{quasi-uniform}, meaning that for all $\e{}\in\Th$, there exists $0<\rho_{0}\leq 1$ independent of $h$ such that $\rho_{0} h\leq h_{\e{}}\leq h$. Following our notation, the generic term \textit{interface} indicates a $(d-1)$-dimensional geometric object, \ie an edge, if $d=2$ and a face if $d=3$. Thus, we denote by $\Fhi$ and $\Fhb$ the set of interior and boundary interfaces, respectively. The set of all interfaces is called the mesh skeleton and is denoted by $\Fh\eqbydef\Fhi\cup\Fhb$. We denote by $\bnd{\Th}\eqbydef\{\cup\bnd{\e{}},\forall\e{}\in\Th\}$, the collection of interfaces of all mesh elements. Let $X$ be a mesh element or an interface; we then denote by $\abs{X}$ a positive $d$- or $(d-1)$-dimensional Lebesgue measure of $X$, respectively. Moreover, for any mesh element $\e{}\in\Th$, we denote by $\FE\eqbydef\{\f{}\in\Fh\,:\,\f{}\subset\bnd{\e{}} \}$ the set of interfaces composing the boundary of $\e{}$; we define $\eta_{\e{}}\eqbydef\card{\FE}$ and $\eta_0\eqbydef\max\limits_{\forall\e{}\in\Th}{(\eta_{\e{}})}$.

\subsection{Broken polynomial spaces}
For any polyhedral domain $D\subset\setR^d$ with $\partial D\subset\setR^{d-1}$, we denote by $\inerprod{\cdot}{\cdot}{D}$ (resp., $\dualprod{\cdot}{\cdot}{\partial D}$) the $\LL{2}$-inner product in $\lebesgue{2}{D}$ (resp., $\lebesgue{2}{\partial D}$) equipped with its natural norm $\norm{\cdot}{0}{D}$ (resp., $\norm{\cdot}{0}{\partial D}$). Let us now introduce some compact notation associated with the discrete $\LL{2}$-inner scalar product:
\begin{equation*}
\inerprod{\cdot}{\cdot}{\Th}\eqbydef\sum_{\e{}\in\Th}\inerprod{\cdot}{\cdot}{\e{}},\quad\dualprod{\cdot}{\cdot}{\partial\Th}\eqbydef\sum_{\e{}\in\Th}\dualprod{\cdot}{\cdot}{\bnd{\e{}}},
\end{equation*}
and we denote by $\norm{\cdot}{0}{\Th}$ and $\norm{\cdot}{0}{\bnd{\Th}}$ the corresponding norms. Similarly, we denote by $\hilbert{s}{D}{}$ the usual Hilbert space of index $s$ on $D$ equipped with its natural norm $\norm{\cdot}{s}{D}$ and seminorm $\seminorm{\cdot}{s}{D}$, respectively. If $s=0$, we then set $\hilbert{0}{D}{}=\lebesgue{2}{D}$. We denote by $\hilbert{s}{\Th}{}$ the usual broken Sobolev space and by $\grdh$ the broken gradient operator acting on $\hilbert{s}{\Th}{}$ with $s\geq 1$. We then assume an extended regularity requirement of the exact solution $u$ of the weak problem \eqref{weak_problem}, \ie $u\in\hilbert{s}{\domain}{0}\cap\hilbert{2}{\Th}{}$ with $s>3/2$. We also introduce the additional unknown $\tu\in\lebesgue{2}{\Fh}$ corresponding to the restriction of $u$ on the mesh skeleton; \ie $\tu\eqbydef u\vert_{\Fh}$. Let us now introduce the composite variable $\cu\eqbydef(u,\tu)$, which belongs to the continuous approximation space $\CV{}\eqbydef\hilbert{s}{\domain}{0}\cap\hilbert{2}{\Th}{}\times\lebesgue{2}{\Fh}$; \ie $\cu\in\CV{}$. As usual in HDG methods, we consider broken Sobolev spaces:
\begin{equation}
    \Pk{k}{\Th}\eqbydef\{\dv\in\lebesgue{2}{\Th}:\dv\vert_{\e{}}\in \Pk{k}{\e{}}, \forall\e{}\in\Th\},
\end{equation}
and similarly for $\Pk{k}{\Fh}$. Here, $\Pk{k}{X}$ denotes the space of polynomials of at least degree $k$ on $X$, where $X$ corresponds to a generic element of $\Th$ or $\Fh$, respectively. For H-IP discretization, two types of discrete variables are necessary to approximate the weak solution $u$ of problem \eqref{weak_problem}. First, the discrete variable $\du\in\Vh$ which is defined within each mesh element, and its trace $\dtu\in\TVh{}$, defined on the mesh skeleton with respect to the imposed homogeneous Dirichlet boundary conditions. To this aim, we set $\Vh\eqbydef\Pk{k}{\Th}$ and $\TVh{}\eqbydef\polyn{k}{\Fh}{0}$, where
\begin{align}
\polyn{k}{\Fh}{0}\eqbydef&\, \{ \dtv\in\polyn{k}{\Fh}{}\, : \, \dtv\vert_{\f{}} = 0,  \; \forall\f{}\in\Fhb  \}.
\end{align}
Throughout the manuscript, we use the following compact notations: Thus, let $\CVh{}\eqbydef\Vh\times\TVh{}$ be the composite approximation space and a generic element of $\CVh{}$ is denoted by $\cdv\eqbydef(\dv,\dtv)$. For all $\e{}\in\Th$ and $\f{}\in\FE$, we define the jump of $\cdv\in\CVh{}$ across $\f{}$ as $\tjump{\cdv}_{\e{},\f{}}\eqbydef(\dv\vert_{\f{}}-\dtv\vert_{\f{}})\mathbf{n}_{\e{},\f{}}$, where $\mathbf{n}_{\e{},\f{}}$ is the unit normal vector to $\f{}$ pointing out of $\e{}$. When confusion cannot arise, we omit the subscripts $\e{}$ and $\f{}$ from the definition, and we simply write $\tjump{\cdv}\eqbydef(\dv-\dtv)\mathbf{n}$. Finally, we introduce the space $\CVast\eqbydef\CV+\CVh{}$ to analyze the boundedness of the discrete bilinear form.

\subsection{Useful inequalities}
We recall here some useful inequalities that will be used extensively (see, \eg \cite{ciarlet1991basic,dpe2011mathematical,Riviere08bookDG}). For clarity, $\C{}$ denotes a generic constant that is independent of $h$, $h_{\e{}}$ and $\bkap$ in the rest of the manuscript. Owing to the shape regularity of $\Th$, we now introduce multiplicative trace inequalities. Let $\e{}\in\Th$ and $\f{}\in\FE$. For all $v\in\hilbert{2}{\e{}}{}$, there exists a constant $\C{M}>0$ independent of $h_{\e{}}$ and $v$ such that
\begin{subequations}
\begin{align}
    \label{multi_trace_ineq1}
    \norm{v}{0}{F}^2\leq\,&\C{M}(\norm{v}{0}{\e{}}\seminorm{v}{1}{\e{}}+h^{-1}_{\e{}}\norm{v}{0}{\e{}}^2),\\
    \label{multi_trace_ineq2}
    \norm{\grdh{v}}{0}{F}^2\leq\,&\C{M}(\seminorm{v}{1}{\e{}}\seminorm{v}{2}{\e{}}+h^{-1}_{\e{}}\seminorm{v}{1}{\e{}}^2).
\end{align}
\end{subequations}
Let us now remind the discrete and inverse trace inequalities, respectively. For all $\dv\in\Vh{}$, then the following holds
\begin{subequations}
\begin{align}
    \label{discret_trace_ineq}
    \norm{\dv}{0}{F}\leq\,&\C{tr}h^{-\ohalf}_{\e{}}\norm{\dv}{0}{\e{}},\\
    \label{inverse_trace_ineq}
    \norm{\grdh{\dv}}{0}{\e{}}\leq\,&\C{inv}h^{-1}_{\e{}}\norm{\dv}{0}{\e{}},
\end{align}
\end{subequations}
where $\C{tr}$ and $\C{inv}$ are positive constants independent of $h_{\e{}}$.
\begin{remark}
Following Rivi\`ere \cite{Riviere08bookDG} (see Section 2.1.3, p.24), one can obtain an exact expression of the constant $\C{tr}$ used in the discrete trace inequality \eqref{discret_trace_ineq} for a $d$-simplex mesh element:
\begin{equation}
    \label{const_ctr}
    \C{tr}\eqbydef\sqrt{\dfrac{(k+1)(k+d)}{d}},
\end{equation}
where $k$ denotes the polynomial degree of $\Vh{}$ and $d$ denotes the spatial dimension. This expression is particularly important in our analysis since it will be used later in the definition of the penalty parameter.
\end{remark}
We are now in a position to introduce the energy-norm used in the stability analysis and error estimations \cite{wells2011analysis,lehrenfeld2010hybrid}. For any given composite function $\cdv\in\CVh{}$, we consider the jump seminorm:
\begin{equation}
    \hdgseminorm{\cdv}{\gamma}^2\eqbydef\sum_{\e{}\in\Th}\sumFe\norm{\gamma^{\ohalf}_{\e{},\f{}}\tjump{\cdv}}{0}{\f{}}^2,
\end{equation}
where $\gamma_{\e{},\f{}}\geq 0$ is an arbitrary positive constant associated with $\f{}\in\FE$. The natural energy-norm equipping the discrete approximation space $\CVh{}$ is given by
\begin{equation}
    \label{nrj_norm1}
    \hdgnorm{\cdv}{\ast}^2\eqbydef\norm{\bkap^{\ohalf}\grdh{\dv}}{0}{\Th}^2+\hdgseminorm{\cdv}{\gamma}^2,
\end{equation}
which clearly depends on $\bkap$.

\section{Hybridizable interior penalty methods}
\label{S:3}
The discrete H-IP problem is to find $\cdu\in\CVh{}$ such that
\begin{equation}
\label{discrete_problem}
\dbilin{\mathcal{B}_h^{(\epsilon)}}{\cdu}{\cdv} = \dlin{l}{\cdv},\quad\forall\cdv\in\CVh{},
\end{equation}
where $\dlin{l}{\cdv}:=\inerprod{f}{\dv}{\Th}$. Here, the bilinear form $\mathcal{B}_h^{(\epsilon)}\,:\,\CVh{}\times\CVh{}\rightarrow\setR$ is given by
%\begin{equation}
%,
\begin{align}
\label{bilinear_form}
\dbilin{\mathcal{B}_h^{(\epsilon)}}{\cdu}{\cdv}&\eqbydef \inerprod{\bkap \grdh\du}{\grdh\dv}{\Th} +\dualprod{\tau\tjump{\cdu}}{\tjump{\cdv}}{\partial\Th} \nonumber\\
&-\dualprod{\bkap \grdh\du}{\tjump{\cdv}}{\partial\Th} -
 \epsilon \dualprod{\bkap \grdh\dv}{\tjump{\cdu}}{\partial\Th},
\end{align}
where $\epsilon\in\{0,\pm1\}$. The second, third and fourth terms on the right-hand side of \eqref{bilinear_form} are called the jump-penalty, consistency, and symmetry terms, respectively. The discrete bilinear operator $\mathcal{B}_h^{(\epsilon)}$ is symmetric iff $\epsilon=1$ and is nonsymmetric otherwise. We obtain the symmetric scheme (H-SIP) if $\epsilon=1$, the incomplete scheme (H-IIP) if $\epsilon=0$ and the nonsymmetric scheme (H-NIP) if $\epsilon=-1$. For all $\e{}\in\Th$ and $\f{}\in\FE$, the penalty term is chosen as follows:
\begin{equation}
    \label{penalty_term}
    \tau_{\e{},\f{}}\eqbydef\dfrac{\alpha_{0}\C{tr}^2\kappa_{\e{},\f{}}}{h_{\e{}}^{1+\delta}}\quad\textrm{with}\quad\delta\in\mathbb{R},
\end{equation}
where $\alpha_{0}$ is a user-dependent parameter, $\C{tr}$ is given by \eqref{const_ctr} and results from the discrete trace inequality \eqref{discret_trace_ineq}, %$h_{\f{}}$ is a local length scale associated with the interface $\f{}$, 
and $\kappa_{\e{},\f{}}\eqbydef\vect{n}_{\e{},\f{}}\bkap_{\e{}}\vect{n}_{\e{},\f{}}$ denotes the normal diffusivity. 
%We then assume that the quantity $h_{\f{}}$ satisfies the following \textit{equivalence condition}, where for all $\e{}\in\Th$ and $\f{}\in\FE$, there exist positive constants $\rho_{1}$ and $\rho_{2}$ independent of $h_{\e{}}$ such that
%\begin{equation}
%\label{equivalence_scale}
%\rho_{1}h_{\e{}}\leq h_{\f{}}\leq \rho_{2}h_{\e{}}.    
%\end{equation}
\begin{remark}
%Different choices of the local length scale $h_{\f{}}$ have been suggested in the literature, \ie $h_{\f{}}\eqbydef\diam{\f{}}$, $h_{\f{}}\eqbydef h_{\e{}}$ (the diameter of $\e{}$), $h_{\f{}}\eqbydef\abs{\f{}}$ (the Lebesgue measure of $\f{}$) and $h_{\f{}}\eqbydef\abs{\e{}}/\abs{\f{}}$ (the Hausdorff measure of $\f{}$) (see, \eg \cite{dpe2011mathematical}). 
For simplicity, we assume that $\bkap$ is approximated by piecewise constants on the mesh element $\Th$; \ie $\bkap_{\vert\e{}}\in\setR^{d\times d}$ for all $\e{}\in\Th$.
\end{remark}
\begin{lemma}[Consistency]
\label{lemma_consistency}
Let $\cu\in\CV$ be the compact notation of the exact solution of the problem \eqref{weak_problem}. For all $\cdv\in\CVh{}$, then the following holds:
\begin{equation}
\label{consistency}
\mathcal{B}_h^{(\cdot)} (\vect{u},\cdv) = l(\cdv).%,\quad\forall\cdv\in\CVh{}.
\end{equation}
\end{lemma}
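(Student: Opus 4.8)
\emph{Sketch of the argument.} The plan is to run the standard element-wise integration by parts, leaning on the two structural facts behind the claim: the extra regularity $u\in\hilbert{s}{\domain}{0}\cap\hilbert{2}{\Th}{}$ with $s>3/2$, which makes the normal trace of $\bkap\grdh u$ meaningful on every interface, and the fact that the composite exact solution $\cu=(u,\tu)$ carries no jumps.

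First I would note that, since $u\in\hilbert{1}{\domain}{0}$ is single-valued across interior interfaces and $\tu\eqbydef u\vert_{\Fh}$, one has $\tjump{\cu}_{\e{},\f{}}=(u\vert_{\f{}}-u\vert_{\f{}})\mathbf{n}_{\e{},\f{}}=0$ on every $\f{}\in\Fhi$, while on $\f{}\in\Fhb$ the homogeneous Dirichlet data gives $u\vert_{\f{}}=\tu\vert_{\f{}}=0$, hence $\tjump{\cu}=0$ there too. Therefore the jump-penalty term $\dualprod{\tau\tjump{\cu}}{\tjump{\cdv}}{\partial\Th}$ and the symmetry term $\epsilon\dualprod{\bkap\grdh\dv}{\tjump{\cu}}{\partial\Th}$ both disappear regardless of $\epsilon\in\{0,\pm1\}$ — which is exactly why the identity is stated for $\mathcal{B}_h^{(\cdot)}$, uniformly in the scheme — and it remains only to establish
\[
\inerprod{\bkap\grdh u}{\grdh\dv}{\Th}-\dualprod{\bkap\grdh u}{\tjump{\cdv}}{\partial\Th}=\inerprod{f}{\dv}{\Th}.
\]

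Next I would integrate by parts on each $\e{}\in\Th$: since $\dv\vert_{\e{}}\in\hilbert{1}{\e{}}{}$ and, because $f\in\lebesgue{2}{\domain}$, the strong form $-\nabla\cdot(\bkap\grd u)=f$ holds in $\lebesgue{2}{\e{}}$, one gets $\inerprod{\bkap\grd u}{\grd\dv}{\e{}}=\inerprod{f}{\dv}{\e{}}+\sum_{\f{}\in\FE}\dualprod{\bkap\grd u\cdot\mathbf{n}_{\e{},\f{}}}{\dv}{\f{}}$. Summing over $\Th$ and subtracting $\dualprod{\bkap\grdh u}{\tjump{\cdv}}{\partial\Th}=\sum_{\e{}\in\Th}\sum_{\f{}\in\FE}\dualprod{\bkap\grd u\cdot\mathbf{n}_{\e{},\f{}}}{\dv-\dtv}{\f{}}$, the contributions carrying $\dv$ cancel exactly and the left-hand side above collapses to $\inerprod{f}{\dv}{\Th}+\sum_{\e{}\in\Th}\sum_{\f{}\in\FE}\dualprod{\bkap\grd u\cdot\mathbf{n}_{\e{},\f{}}}{\dtv}{\f{}}$.

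It then remains to show that this residual skeleton sum vanishes. Here I would use that $f\in\lebesgue{2}{\domain}$ places $\bkap\grd u$ in $H(\mathrm{div},\domain)$, so its normal component is continuous across each $\f{}\in\Fhi$; since the two elements sharing such an $\f{}$ induce opposite normals $\mathbf{n}_{\e{},\f{}}$ whereas $\dtv$ is single-valued on $\f{}$, the two face contributions cancel, and on $\f{}\in\Fhb$ the contribution is zero because $\dtv\in\TVh{}=\polyn{k}{\Fh}{0}$ vanishes on boundary interfaces. Hence the residual sum is zero and $\mathcal{B}_h^{(\cdot)}(\cu,\cdv)=\inerprod{f}{\dv}{\Th}=l(\cdv)$. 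The only genuinely delicate points — the ones I would be careful to spell out — are the regularity justifications: $s>3/2$ (together with $u\in\hilbert{2}{\Th}{}$) is what gives $\grdh u$ well-defined $\lebesgue{2}$-traces on the interfaces, so every face integral above makes sense, and $f\in\lebesgue{2}{\domain}$ is what simultaneously legitimizes the element-wise strong form and the single-valuedness of the normal flux on $\Fhi$; the rest is bookkeeping.
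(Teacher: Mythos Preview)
Your proof is correct and follows essentially the same route as the paper: both arguments use that $\tjump{\cu}=0$ kills the penalty and symmetry terms, then integrate by parts element-wise and invoke the continuity of the normal flux (the transmission condition) together with $\dtv\vert_{\Fhb}=0$ to eliminate the residual skeleton terms. The only cosmetic difference is that the paper splits the test function as $\cdv=(\dv,0)+(0,\dtv)$ and treats the two pieces separately, whereas you keep $\cdv=(\dv,\dtv)$ intact throughout; your version is also more explicit about the regularity ingredients ($u\in\hilbert{2}{\Th}{}$ for face traces of $\grdh u$, and $\bkap\grd u\in H(\mathrm{div},\domain)$ for single-valued normal flux), which the paper leaves implicit.
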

\begin{proof}
The regularity of $\cu\eqbydef(u,\tu)$ implies that its jump (in the HDG sense) is null on $\bnd{\Th{}}$, \ie for all $\e{}\in\Th{}$ and $\f{}\in\FE$ then $\tjump{\cu}\eqbydef 0$, since $u$ is a single-valued field on the mesh skeleton. Thus, by setting $\cdv\eqbydef(\dv,0)$, and integrating by parts on each element of the mesh, the bilinear form $\mathcal{B}_h^{(\epsilon)}$ yields
%\begin{subequations}
\begin{align}
\label{local_consist}
\dbilin{\mathcal{B}_h^{(\epsilon)}}{\vect{u}}{(\dv,0)}\eqbydef&\sum_{\e{}\in\Th}\inerprod{\dvgh(-\bkap\grdh u)}{\dv}{\e{}}=\sum_{\e{}\in\Th}\inerprod{f}{\dv}{\e{}}.
\end{align}
%\end{subequations}
Considering now that $\cdv\eqbydef(0,\dtv)\in\CVh{}$ and $\dtv$ vanishes on the boundary skeleton $\Fhb$, we then obtain
\begin{equation}
    \label{trans_consist}
    \dbilin{\mathcal{B}_h^{(\epsilon)}}{\vect{u}}{(0,\dtv)}\eqbydef\dualprod{(\bkap\grdh u)\cdot\vect{n}}{\dtv}{\partial\Th}=0,
\end{equation}
which corresponds to the transmission conditions. The proof is then completed by summing \eqref{local_consist} and \eqref{trans_consist}.

\end{proof}
A straightforward consequence of Lemma \ref{lemma_consistency} is the Galerkin orthogonality.
\begin{proposition}[Galerkin orthogonality]
\label{proposition_galerkin_ortho}
Let $\cu\in\CV$ be the compact notation of the exact solution of the problem \eqref{weak_problem}, and $\cdu\in\CVh{}$, the solution of the discrete problem (\ref{discrete_problem}). Then,
\begin{equation}
    \label{galerkin_ortho}
    \dbilin{\mathcal{B}_h^{(\cdot)}}{\vect{u}-\cdu}{\cdv}=0\quad \forall\cdv\in\CVh{}.
\end{equation}
\end{proposition}
\begin{proof}
Subtracting \eqref{consistency} and \eqref{discrete_problem} yields the assertion.
\end{proof}

\subsection{Coercivity and well-posedness}

The next step is to prove discrete coercivity of  $\mathcal{B}_h^{(\cdot)}$ to ensure the well-posedness of \eqref{discrete_problem}. To this end, we first need to establish an upper bound of the consistency term using the jump seminorm $\hdgseminorm{\cdot}{\tau}$.
\begin{lemma}[Bound on consistency term]
\label{lemma_bound_consistency}
There exists a constant $\C{\delta}>0$ which is $h^{\delta}$-dependent such that%, for all $(\cdw,\cdv)\in\CVh{}\times\CVh{}$ then,  
%\begin{small}
\begin{align}
    \abs{\dualprod{\bkap \grdh\dw}{\tjump{\cdv}}{\partial\Th}}\leq \C{\delta}^{\ohalf}\norm{\bkap^{\ohalf}\grdh\dw}{0}{\Th}\hdgseminorm{\cdv}{\tau},
\end{align}
%\end{small}
for all $(\cdw,\cdv)\in\CVh{}\times\CVh{}$. Here $\C{\delta}\eqbydef\C{0}h^{\delta}$ and $\C{0}\eqbydef\C{}\eta_0/\alpha_0$ is a constant dependent of the element shape only.
\end{lemma}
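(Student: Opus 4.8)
The plan is to localize the skeleton term $\dualprod{\bkap\grdh\dw}{\tjump{\cdv}}{\partial\Th}$ over mesh elements and their faces, apply a $\bkap$-weighted Cauchy--Schwarz inequality on each interface, and then let the geometric and anisotropic constants be absorbed exactly by the penalty weight $\tau_{\e{},\f{}}$ of \eqref{penalty_term}. Writing $\tjump{\cdv}=(\dv-\dtv)\vect{n}_{\e{},\f{}}$, the triangle and Cauchy--Schwarz inequalities give
\begin{align*}
\abs{\dualprod{\bkap\grdh\dw}{\tjump{\cdv}}{\partial\Th}}
&\leq\sum_{\e{}\in\Th}\sum_{\f{}\in\FE}\norm{(\bkap_{\e{}}\grdh\dw)\cdot\vect{n}_{\e{},\f{}}}{0}{\f{}}\,\norm{\dv-\dtv}{0}{\f{}}.
\end{align*}
For the first factor I would use the identity $(\bkap_{\e{}}\grdh\dw)\cdot\vect{n}_{\e{},\f{}}=(\bkap_{\e{}}^{\ohalf}\vect{n}_{\e{},\f{}})\cdot(\bkap_{\e{}}^{\ohalf}\grdh\dw)$ and $\abs{\bkap_{\e{}}^{\ohalf}\vect{n}_{\e{},\f{}}}^{2}=\vect{n}_{\e{},\f{}}\bkap_{\e{}}\vect{n}_{\e{},\f{}}=\kappa_{\e{},\f{}}$, which gives the pointwise estimate $\abs{(\bkap_{\e{}}\grdh\dw)\cdot\vect{n}_{\e{},\f{}}}^{2}\leq\kappa_{\e{},\f{}}\,\abs{\bkap_{\e{}}^{\ohalf}\grdh\dw}^{2}$; this is precisely what lets only the normal diffusivity $\kappa_{\e{},\f{}}$ appear, matching the scaling in \eqref{penalty_term}.

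Next I would pass from the face $\f{}$ to the element $\e{}$: integrating the pointwise estimate over $\f{}$ and applying the discrete trace inequality \eqref{discret_trace_ineq} componentwise to $\bkap_{\e{}}^{\ohalf}\grdh\dw$ --- which is legitimate because $\bkap$ is piecewise constant, so $\bkap_{\e{}}^{\ohalf}\grdh\dw$ is a vector-valued polynomial of degree $\leq k-1$ on $\e{}$, and because the constant $\C{tr}$ of \eqref{const_ctr} for degree $k$ dominates the one for degree $k-1$ --- and then invoking the very definition of $\tau_{\e{},\f{}}$, one obtains
\begin{align*}
\norm{(\bkap_{\e{}}\grdh\dw)\cdot\vect{n}_{\e{},\f{}}}{0}{\f{}}^{2}
&\leq\C{tr}^{2}\,\kappa_{\e{},\f{}}\,h_{\e{}}^{-1}\norm{\bkap_{\e{}}^{\ohalf}\grdh\dw}{0}{\e{}}^{2}\\
&=\frac{h_{\e{}}^{\delta}}{\alpha_{0}}\,\tau_{\e{},\f{}}\,\norm{\bkap_{\e{}}^{\ohalf}\grdh\dw}{0}{\e{}}^{2}.
\end{align*}
Since $\Th$ is quasi-uniform, $h_{\e{}}^{\delta}\leq\C{}\,h^{\delta}$ with $\C{}$ depending only on $\rho_{0}$ (it equals $1$ when $\delta\geq0$ and $\rho_{0}^{\delta}$ when $\delta<0$), which is exactly the source of the claimed $h^{\delta}$-dependency.

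To conclude I would combine the two bounds, rewrite $\norm{\dv-\dtv}{0}{\f{}}=\tau_{\e{},\f{}}^{-\ohalf}\,\norm{\tau_{\e{},\f{}}^{\ohalf}\tjump{\cdv}}{0}{\f{}}$, and sum twice: a discrete Cauchy--Schwarz over the (at most $\eta_{\e{}}\leq\eta_{0}$) faces of each element, followed by one over the elements, yields the asserted inequality with $\C{\delta}\eqbydef\C{0}h^{\delta}$ and $\C{0}\eqbydef\C{}\eta_{0}/\alpha_{0}$. I do not anticipate a genuine obstacle, since the argument is merely a chain of Cauchy--Schwarz inequalities; the only delicate point --- and the reason the penalty \eqref{penalty_term} carries the factor $\C{tr}^{2}\kappa_{\e{},\f{}}$ --- is to ensure that the geometric and anisotropic constant generated jointly by the pointwise Cauchy--Schwarz and the discrete trace inequality cancels $\tau_{\e{},\f{}}$ exactly, leaving only the benign factor $h_{\e{}}^{\delta}/\alpha_{0}$ and the face-counting constant $\eta_{0}$.
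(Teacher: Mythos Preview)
Your proposal is correct and follows essentially the same route as the paper's proof: both localize over elements, split $\bkap=\bkap^{\ohalf}\bkap^{\ohalf}$ so that the normal diffusivity $\kappa_{\e{},\f{}}=\vect{n}_{\e{},\f{}}^{T}\bkap_{\e{}}\vect{n}_{\e{},\f{}}$ emerges and is absorbed exactly by the penalty weight \eqref{penalty_term}, apply the discrete trace inequality \eqref{discret_trace_ineq} to $\bkap_{\e{}}^{\ohalf}\grdh\dw$, invoke quasi-uniformity for $h_{\e{}}^{\delta}\leq\C{}h^{\delta}$, and finish with a discrete Cauchy--Schwarz over elements. The only cosmetic difference is that the paper works on the whole element boundary $\bnd{\e{}}$ at once via the vector Cauchy--Schwarz $\abs{\dualprod{\bkap^{\ohalf}\grdh\dw}{\bkap^{\ohalf}\tjump{\cdv}}{\bnd{\e{}}}}\leq\norm{\bkap^{\ohalf}\grdh\dw}{0}{\bnd{\e{}}}\norm{\bkap^{\ohalf}\tjump{\cdv}}{0}{\bnd{\e{}}}$, whereas you work face-by-face with the explicit pointwise bound; the resulting constant $\C{0}=\C{}\eta_{0}/\alpha_{0}$ is identical.
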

\begin{proof}
The decomposition of the consistency term yields
\begin{align*}
    \dualprod{\bkap \grdh\dw}{\tjump{\cdv}}{\partial\Th}=\sumT\dualprod{\bkap\grdh\dw}{\tjump{\cdv}}{\bnd{\e{}}}.
\end{align*}
Applying the Cauchy--Schwarz inequality, using the definition of $\tau$ given in \eqref{penalty_term} and finally applying the discrete trace inequality \eqref{discret_trace_ineq}, we infer that
\begin{align*}
    \abs{\dualprod{\bkap\grdh\dw}{\tjump{\cdv}}{\bnd{\e{}}}}\leq&\norm{\bkap^{\ohalf}\grdh\dw}{0}{\bnd{\e{}}}\norm{\bkap^{\ohalf}\tjump{\cdv}}{0}{\bnd{\e{}}},\\
   \leq&\bigg[\dfrac{h^{1+\delta}_{\e{}}}{\alpha_{0}\C{tr}^2}\bigg]^{\ohalf}\norm{\bkap^{\ohalf}\grdh\dw}{0}{\bnd{\e{}}}\seminorm{\cdv}{\tau}{\bnd{\e{}}},\\
   \leq&\bigg[\dfrac{\eta_{\e{}}h^{\delta}_{\e{}}}{\alpha_{0}}\bigg]^{\ohalf}\norm{\bkap^{\ohalf}\grdh\dw}{0}{\e{}}\seminorm{\cdv}{\tau}{\bnd{\e{}}}.
\end{align*}
Considering now the quasi-uniformity requirement of the partition $\Th$ -- \ie for all $\e{}\in\Th$ and $\delta\in\mathbb{R}$ there exists $\C{}$ such that $h^{\delta}_{\e{}}\leq Ch^{\delta}$, we thus obtain
\begin{align*}
    \abs{\dualprod{\bkap\grdh\dw}{\tjump{\cdv}}{\bnd{\e{}}}}\leq\bigg[\dfrac{\C{}\eta_{0}h^{\delta}}{\alpha_{0}}\bigg]^{\ohalf}\norm{\bkap^{\ohalf}\grdh\dw}{0}{\e{}}\seminorm{\cdv}{\tau}{\bnd{\e{}}}.
\end{align*}
The proof is thus completed by summing over all mesh elements, and applying the Cauchy--Schwarz inequality.
\end{proof}
\begin{lemma}[Coercivity]
\label{lemma_coercivity}
Let us first introduce the (minimal) threshold value of the following form $\underline{\alpha}_{\epsilon,\delta}\eqbydef\C{\epsilon}h^{\delta}$ where $\C{\epsilon}\geq 0$ which is null iif $\epsilon=-1$. If the penalty parameter $\alpha_0$ in \eqref{penalty_term} is chosen large enough, \ie $\alpha_0>\underline{\alpha}_{\epsilon,\delta}$, then the discrete bilinear form $\mathcal{B}_h^{(\cdot)}$ is $\CVh{}$-coercive with respect to the energy-norm $\hdgnorm{\cdot}{\ast}$; \ie for all $\cdv\in\CVh{}$, then the following holds
\begin{equation}
    \label{coercivity}
    \dbilin{\mathcal{B}_h^{(\cdot)}}{\cdv}{\cdv}\geq \dfrac{1}{2}\hdgnorm{\cdv}{\ast}^2.
\end{equation}
\end{lemma}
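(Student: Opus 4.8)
The plan is to evaluate the bilinear form on the diagonal, that is, to test \eqref{bilinear_form} with $\cdu=\cdv$ for an arbitrary $\cdv\in\CVh{}$. By construction the diffusive term and the jump-penalty term reproduce exactly the two pieces of the energy-norm \eqref{nrj_norm1}, while the consistency and symmetry terms coalesce into a single cross term, yielding
\begin{equation*}
\dbilin{\mathcal{B}_h^{(\epsilon)}}{\cdv}{\cdv}=\hdgnorm{\cdv}{\ast}^2-(1+\epsilon)\dualprod{\bkap\grdh\dv}{\tjump{\cdv}}{\partial\Th}.
\end{equation*}
For the nonsymmetric scheme $\epsilon=-1$ the factor $1+\epsilon$ vanishes, so \eqref{coercivity} holds trivially with constant $1$ and no restriction on $\alpha_0$; this is exactly the case $\C{\epsilon}=0$ of the statement.

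For $\epsilon\in\{0,1\}$ the whole difficulty is to absorb the cross term into the two positive contributions. I would apply Lemma \ref{lemma_bound_consistency} with $\cdw=\cdv$ to get $\abs{\dualprod{\bkap\grdh\dv}{\tjump{\cdv}}{\partial\Th}}\leq\C{\delta}^{\ohalf}\norm{\bkap^{\ohalf}\grdh\dv}{0}{\Th}\hdgseminorm{\cdv}{\tau}$ with $\C{\delta}=\C{0}h^{\delta}$ and $\C{0}=\C{}\eta_0/\alpha_0$, and then use Young's inequality $ab\leq\tfrac12 a^2+\tfrac12 b^2$ tuned so that at most half of $\norm{\bkap^{\ohalf}\grdh\dv}{0}{\Th}^2$ is consumed. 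This gives
\begin{equation*}
\dbilin{\mathcal{B}_h^{(\epsilon)}}{\cdv}{\cdv}\geq\tfrac12\norm{\bkap^{\ohalf}\grdh\dv}{0}{\Th}^2+\Bigl(1-\tfrac12(1+\epsilon)^2\C{\delta}\Bigr)\hdgseminorm{\cdv}{\tau}^2,
\end{equation*}
and \eqref{coercivity} follows as soon as $1-\tfrac12(1+\epsilon)^2\C{\delta}\geq\tfrac12$, i.e.\ $(1+\epsilon)^2\C{0}h^{\delta}\leq 1$. Substituting $\C{0}=\C{}\eta_0/\alpha_0$ turns this into $\alpha_0>\underline{\alpha}_{\epsilon,\delta}$ with $\C{\epsilon}\eqbydef(1+\epsilon)^2\C{}\eta_0$, which is strictly positive for $\epsilon\in\{0,1\}$ and null for $\epsilon=-1$, as claimed; the $h^{\delta}$ factor in the threshold is inherited directly from Lemma \ref{lemma_bound_consistency}.

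Well-posedness is then immediate: $\CVh{}$ is finite-dimensional, $\cdv\mapsto\dlin{l}{\cdv}$ is a bounded linear functional, and coercivity forces injectivity of the associated operator, so \eqref{discrete_problem} admits a unique solution by the Lax--Milgram lemma. I do not expect a substantive obstacle in this argument; the only delicate point is the bookkeeping of constants — picking the Young weight so that precisely one half of the gradient norm is left over, and carrying the $\alpha_0^{-1}$ and $h^{\delta}$ dependence of $\C{\delta}$ all the way to the advertised form of $\underline{\alpha}_{\epsilon,\delta}$.
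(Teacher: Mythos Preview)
Your argument is correct and follows essentially the same route as the paper: evaluate the bilinear form on the diagonal, dispose of the $\epsilon=-1$ case immediately, and for $\epsilon\in\{0,1\}$ combine Lemma~\ref{lemma_bound_consistency} with Young's inequality to absorb the cross term. The only cosmetic difference is that the paper introduces a free Young parameter $\zeta\in(0,1)$, obtains the lower bound $\min(1-\C{\delta}/\zeta,\,1-\zeta)\hdgnorm{\cdv}{\ast}^2$, and then specializes to $\zeta=1/2$, whereas you fix the weight from the outset so that exactly half of $\norm{\bkap^{\ohalf}\grdh\dv}{0}{\Th}^2$ survives; both choices lead to the same threshold structure $\underline{\alpha}_{\epsilon,\delta}=\C{\epsilon}h^{\delta}$.
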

\begin{proof}
Setting $\cdu=\cdv$ in \eqref{bilinear_form}, we thus obtain
\begin{align*}
\dbilin{\mathcal{B}_h^{(\epsilon)}}{\cdv}{\cdv} =  \norm{\bkap^{\ohalf}\grdh\dv}{0}{\Th}^2+\hdgseminorm{\cdv}{\tau}^2-%\\
(1+\epsilon)\dualprod{\bkap \grdh\dv}{\tjump{\cdv}}{\partial\Th},
\end{align*}
proving immediately the coercivity of H-NIP scheme ($\epsilon=-1$) for any given value $\alpha_{0}>0$. Else, owing to Lemmata \ref{lemma_bound_consistency} and using Young's inequality, for any $0<\zeta<1$, there exists a constant $\C{\zeta}>0$ such that
\begin{align*}
\dbilin{\mathcal{B}_h^{(\epsilon)}}{\cdv}{\cdv} \geq &\bigg[1-\dfrac{\C{\delta}}{\zeta}\bigg]\norm{\bkap^{\ohalf}\grdh\dv}{0}{\Th}^2+(1-\zeta)\hdgseminorm{\cdv}{\tau}^2\\
\geq&\C{\zeta}\hdgnorm{\cdv}{\ast}^2,
\end{align*}
where $\C{\zeta}\eqbydef\min(1-\C{\delta}/\zeta,1-\zeta)$.
We now select $\alpha_{0}$ in the definition of $\C{\delta}$ such that $\C{\zeta}\eqbydef1-\zeta$; \ie $\C{\delta}<\zeta^2$ or equivalently by assuming that $\alpha_{0}>\zeta^{-2}\C{}\eta_0h^{\delta}$. The proof is thus completed by setting (arbitrary) $\zeta=1/2$.%, we easily bound $\C{\ohalf}\geq1/2$ for any value of the parameter $\epsilon$, thus completing the proof.
\end{proof}

\begin{remark}
Note here the $h^{\delta}$-dependency of the coercivity condition of both H-SIP and H-IIP schemes. A straightforward consequence of the consistency and coercivity requirements via the Lax--Milgram Theorem is the well-posedness of the weak problem \eqref{discrete_problem}; \ie the existence and uniqueness of $\cdu\in\CVh{}$ are ensured.
\end{remark}

\subsection{Boundedness}

We now assume that the bilinear form $\mathcal{B}_h^{(\epsilon)}$ can be extended to $\CVast\times\CVast$, and we assert the boundedness of the product space. To this end, we introduce the enriched energy-norm on $\CVast$ denoted by $\tnorm{\cdot}$ (which is also a natural norm on $\CVh{}$) to bound the (normal) derivative terms \cite{lehrenfeld2010hybrid}.  For all $\cv\in\CVast$, then we set
\begin{equation}
    \label{nrj_norm2}
    \tnorm{\cv}^2\eqbydef\hdgnorm{\cv}{\ast}^2+\sumT h_{\e{}}\norm{\bkap^{\ohalf}\grdh v}{0}{\bnd{\e{}}}^2.
\end{equation}
\begin{lemma}[Equivalency of $\hdgnorm{\cdot}{\ast}$- and $\tnorm{\cdot}$-norms] \label{norm_equivalency}
%For all $\cdv\in\CVh{}$, both 
The energy norms $\hdgnorm{\cdot}{\ast}$ and $\tnorm{\cdot}$ given by \eqref{nrj_norm1} and \eqref{nrj_norm2}, respectively, are uniformly equivalent on $\CVh{}$; \ie there exists a constant $\rho>0$ such that
\begin{equation}
    \label{norm_equivalence}
    \forall\cdv\in\CVh{},\quad\inv{\rho}\tnorm{\cdv}\leq \hdgnorm{\cdv}{\ast}\leq\tnorm{\cdv},
\end{equation}
where $\rho\eqbydef(1+\eta_{0}\C{tr}^2)^{\frac{1}{2}}$ depends only on the element shape.
\end{lemma}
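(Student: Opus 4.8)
The plan is to prove the two-sided inequality \eqref{norm_equivalence} directly from the definitions \eqref{nrj_norm1} and \eqref{nrj_norm2}. The right-hand inequality $\hdgnorm{\cdv}{\ast}\leq\tnorm{\cdv}$ is immediate: by \eqref{nrj_norm2} the norm $\tnorm{\cdv}^2$ equals $\hdgnorm{\cdv}{\ast}^2$ plus the nonnegative quantity $\sumT h_{\e{}}\norm{\bkap^{\ohalf}\grdh v}{0}{\bnd{\e{}}}^2$, so dropping that term gives the bound with constant $1$. Hence the whole content is the left-hand inequality, equivalently the estimate
\begin{equation*}
\sumT h_{\e{}}\norm{\bkap^{\ohalf}\grdh\dv}{0}{\bnd{\e{}}}^2\leq(\rho^2-1)\,\hdgnorm{\cdv}{\ast}^2=\eta_0\C{tr}^2\norm{\bkap^{\ohalf}\grdh\dv}{0}{\Th}^2,
\end{equation*}
since $\tnorm{\cdv}^2\leq\rho^2\hdgnorm{\cdv}{\ast}^2$ is the same as adding $\hdgnorm{\cdv}{\ast}^2$ on both sides.

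First I would fix $\e{}\in\Th$ and write $\norm{\bkap^{\ohalf}\grdh\dv}{0}{\bnd{\e{}}}^2=\sum_{\f{}\in\FE}\norm{\bkap^{\ohalf}\grdh\dv}{0}{\f{}}^2$. Here one must be slightly careful because $\bkap$ is matrix-valued; but since $\bkap_{\vert\e{}}$ is constant on each element (by the Remark preceding the lemma) and symmetric positive definite, $\bkap^{\ohalf}\grdh\dv$ is a vector-valued polynomial on $\e{}$ of the same degree as $\grdh\dv$, so the discrete trace inequality \eqref{discret_trace_ineq} applies componentwise and yields $\norm{\bkap^{\ohalf}\grdh\dv}{0}{\f{}}^2\leq\C{tr}^2h_{\e{}}^{-1}\norm{\bkap^{\ohalf}\grdh\dv}{0}{\e{}}^2$. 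Summing over the $\f{}\in\FE$ — there are $\eta_{\e{}}\leq\eta_0$ of them — gives $\norm{\bkap^{\ohalf}\grdh\dv}{0}{\bnd{\e{}}}^2\leq\eta_0\C{tr}^2h_{\e{}}^{-1}\norm{\bkap^{\ohalf}\grdh\dv}{0}{\e{}}^2$. Multiplying by $h_{\e{}}$ cancels the negative power, and summing over all $\e{}\in\Th$ produces exactly $\sumT h_{\e{}}\norm{\bkap^{\ohalf}\grdh\dv}{0}{\bnd{\e{}}}^2\leq\eta_0\C{tr}^2\norm{\bkap^{\ohalf}\grdh\dv}{0}{\Th}^2\leq\eta_0\C{tr}^2\hdgnorm{\cdv}{\ast}^2$.

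Adding $\hdgnorm{\cdv}{\ast}^2$ to both sides then gives $\tnorm{\cdv}^2\leq(1+\eta_0\C{tr}^2)\hdgnorm{\cdv}{\ast}^2=\rho^2\hdgnorm{\cdv}{\ast}^2$, which is the announced left inequality $\inv{\rho}\tnorm{\cdv}\leq\hdgnorm{\cdv}{\ast}$. I would also note that the argument only uses $\dv\in\Vh$ (the discrete trace inequality is false on $\hilbert{1}{\e{}}{}$), which is why the equivalence is stated on $\CVh{}$ and not on $\CVast$. There is no real obstacle here; the only point requiring a line of justification is the reduction from the matrix-weighted trace bound to the scalar inverse-trace inequality \eqref{discret_trace_ineq}, which works precisely because $\bkap$ is piecewise constant so that $\bkap^{\ohalf}\grdh\dv$ remains polynomial of degree $\le k$ on each element.
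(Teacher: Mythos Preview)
Your proof is correct and follows essentially the same route as the paper's: the right-hand inequality is immediate from the definition, and the left-hand one comes from bounding $\tnorm{\cdv}^2-\hdgnorm{\cdv}{\ast}^2=\sumT h_{\e{}}\norm{\bkap^{\ohalf}\grdh\dv}{0}{\bnd{\e{}}}^2$ by $\eta_0\C{tr}^2\norm{\bkap^{\ohalf}\grdh\dv}{0}{\Th}^2$ via the discrete trace inequality. Your added remark that $\bkap^{\ohalf}\grdh\dv$ remains elementwise polynomial (thanks to the piecewise-constant assumption on $\bkap$) is a useful justification that the paper leaves implicit.
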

\begin{proof}
Following the definition \eqref{nrj_norm2}, we first notice that $\hdgnorm{\cdv}{\ast}\leq\tnorm{\cdv}$. As $\cdv$ is piecewise polynomial, we now can easily bound the difference of both norms by using the discrete trace inequality \eqref{discret_trace_ineq},
\begin{equation*}
\tnorm{\cdv}^2 - \hdgnorm{\cdv}{\ast}^2 \leq \eta_0 \C{tr}^2  \norm{\bkap^{\ohalf}\grdh{\dv}}{0}{\Th}^2
\leq \eta_{0} \C{tr}^2 \hdgnorm{\cdv}{\ast}^2,
\end{equation*}
which yields the assertion.
\end{proof}

\begin{lemma}[Boundedness with $h^{\delta}$-dependency]
\label{lemma_boundedness}
%For all $(\cw,\cv) \in \CVast\times\CVast$, 
There exists a constant $\C{bnd}>0$ which is $h^{\delta}$-dependent such that
\begin{equation}
\label{boundedness}
\forall(\cw,\cv)\in \CVast\times\CVast,\quad\dbilin{\mathcal{B}_h^{(\epsilon)}}{\cw}{\cv} \leq \C{bnd}\tnorm{\cw}\,\tnorm{\cv},
\end{equation}
where $\C{bnd}\eqbydef\max(2,\C{1}h^{\delta})$ and $\C{1}\eqbydef\inv{(\alpha_{0}\C{tr}^2)}$ is a constant independent of $h$.
\end{lemma}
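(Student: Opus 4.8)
The plan is to bound each of the four terms in the bilinear form \eqref{bilinear_form} separately by a product $\tnorm{\cw}\,\tnorm{\cv}$ times an explicit constant, and then collect the worst of these constants into $\C{bnd}$. First I would treat the volume term $\inerprod{\bkap \grdh\dw}{\grdh\dv}{\Th}$: a direct Cauchy--Schwarz inequality in $\lebesgue{2}{\Th}$ with the weight $\bkap$ gives the bound $\norm{\bkap^{\ohalf}\grdh\dw}{0}{\Th}\norm{\bkap^{\ohalf}\grdh\dv}{0}{\Th}\leq\tnorm{\cw}\,\tnorm{\cv}$, since $\norm{\bkap^{\ohalf}\grdh v}{0}{\Th}\leq\hdgnorm{\cv}{\ast}\leq\tnorm{\cv}$ by \eqref{nrj_norm1}--\eqref{nrj_norm2}. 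Next, the jump-penalty term $\dualprod{\tau\tjump{\cdw}}{\tjump{\cdv}}{\partial\Th}$: writing $\tau\tjump{\cdw}\cdot\tjump{\cdv} = (\tau^{\ohalf}\tjump{\cdw})\cdot(\tau^{\ohalf}\tjump{\cdv})$ and applying Cauchy--Schwarz over $\partial\Th$ yields $\hdgseminorm{\cdw}{\tau}\hdgseminorm{\cdv}{\tau}\leq\tnorm{\cw}\,\tnorm{\cv}$. These two together already furnish the factor $2$ in $\C{bnd}\eqbydef\max(2,\C{1}h^{\delta})$.

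The remaining work is the two normal-derivative terms, $-\dualprod{\bkap \grdh\dw}{\tjump{\cdv}}{\partial\Th}$ and $-\epsilon\dualprod{\bkap \grdh\dv}{\tjump{\cdw}}{\partial\Th}$, which are structurally identical up to the roles of $\cw$ and $\cv$ and the harmless factor $\abs{\epsilon}\leq 1$. For a single element $\e{}$ and interface $\f{}\in\FE$, Cauchy--Schwarz on $\f{}$ gives $\abs{\dualprod{\bkap\grdh\dw}{\tjump{\cdv}}{\f{}}}\leq\norm{\bkap^{\ohalf}\grdh\dw}{0}{\f{}}\norm{\bkap^{\ohalf}\tjump{\cdv}}{0}{\f{}}$. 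The idea is now to insert the penalty weight artificially: multiply and divide by $\tau_{\e{},\f{}}^{\ohalf}$, so that $\norm{\bkap^{\ohalf}\tjump{\cdv}}{0}{\f{}} = \tau_{\e{},\f{}}^{-\ohalf}\,\norm{\tau_{\e{},\f{}}^{\ohalf}\bkap^{\ohalf}\tjump{\cdv}}{0}{\f{}}$ and $\norm{\bkap^{\ohalf}\grdh\dw}{0}{\f{}} = (h_{\e{}}\norm{\bkap^{\ohalf}\grdh\dw}{0}{\f{}}^2)^{\ohalf} h_{\e{}}^{-\ohalf}$. Using the explicit form $\tau_{\e{},\f{}}=\alpha_{0}\C{tr}^2\kappa_{\e{},\f{}}/h_{\e{}}^{1+\delta}$ from \eqref{penalty_term}, the product of the two scaling prefactors is $h_{\e{}}^{-\ohalf}\tau_{\e{},\f{}}^{-\ohalf} = (\alpha_{0}\C{tr}^2\kappa_{\e{},\f{}})^{-\ohalf}h_{\e{}}^{\delta/2}$; invoking quasi-uniformity ($h_{\e{}}^{\delta}\leq\C{}h^{\delta}$) and absorbing $\kappa_{\e{},\f{}}$ against the $\bkap^{\ohalf}$ already present in $\norm{\bkap^{\ohalf}\grdh\dw}{0}{\f{}}$ (since $\kappa_{\e{},\f{}}=\vect{n}\bkap\vect{n}$ controls the normal component of $\bkap^{\ohalf}\grdh\dw$), this leaves a prefactor of order $(\alpha_{0}\C{tr}^2)^{-\ohalf}h^{\delta/2}$. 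Applying discrete Cauchy--Schwarz over the pairs $(\e{},\f{})$ and recognising $\sum_{\e{}}\sum_{\f{}\in\FE} h_{\e{}}\norm{\bkap^{\ohalf}\grdh\dw}{0}{\f{}}^2 \leq \eta_0\sum_{\e{}}h_{\e{}}\norm{\bkap^{\ohalf}\grdh\dw}{0}{\bnd{\e{}}}^2\leq\eta_0\tnorm{\cw}^2$ on one side and $\hdgseminorm{\cdv}{\tau}^2\leq\tnorm{\cv}^2$ on the other, one obtains a bound of the form $\C{}(\eta_0/\alpha_0\C{tr}^2)^{\ohalf}h^{\delta/2}\tnorm{\cw}\,\tnorm{\cv}$, hence after squaring into $\C{1}\eqbydef 1/(\alpha_0\C{tr}^2)$ (up to shape-only constants) a contribution $\C{1}h^{\delta}$.

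Summing the four contributions and using $\abs{\epsilon}\leq 1$ gives $\dbilin{\mathcal{B}_h^{(\epsilon)}}{\cw}{\cv}\leq (2 + \C{1}h^{\delta})\tnorm{\cw}\,\tnorm{\cv}$, and since only the relative size of the two blocks matters for the statement, replacing the sum by $\max(2,\C{1}h^{\delta})$ (absorbing a harmless factor into the shape constant, or simply bounding $2+\C{1}h^{\delta}\leq 2\max(2,\C{1}h^{\delta})$) yields \eqref{boundedness}. The main obstacle I anticipate is purely bookkeeping: tracking exactly where the factor $\C{tr}^2$ and the normal diffusivity $\kappa_{\e{},\f{}}$ enter, so that the final constant $\C{1}$ comes out as the clean expression $1/(\alpha_0\C{tr}^2)$ claimed in the statement rather than carrying extra $\kappa$- or $d$-dependence; the choice of $\C{tr}$ in \eqref{const_ctr} inside the definition of $\tau$ is precisely what makes the discrete trace inequality \eqref{discret_trace_ineq} absorb with constant one, which is the device that keeps $\C{1}$ independent of $\bkap$ and of $h$.
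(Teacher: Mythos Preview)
Your decomposition and the termwise treatment of the volume, penalty and consistency contributions match the paper's argument closely, but there is a slip in the final assembly that prevents you from landing on the stated constant $\C{bnd}=\max(2,\C{1}h^{\delta})$. After your estimate of each normal-derivative term you obtain a prefactor of order $(\C{1}h^{\delta})^{\ohalf}$, not $\C{1}h^{\delta}$; the phrase ``hence after squaring into $\C{1}$'' does not correspond to any operation you actually perform. Bounding each of the four terms separately by $\tnorm{\cw}\tnorm{\cv}$ and then summing therefore yields at best $2+2(\C{1}h^{\delta})^{\ohalf}$, which is not the constant claimed and carries a different $h$-scaling for $\delta<0$ (namely $h^{\delta/2}$ rather than $h^{\delta}$).

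The paper avoids this by \emph{not} collapsing each piece to $\tnorm{\cw}\tnorm{\cv}$ prematurely. It retains the structured estimates $\abs{\T{1}+\T{2}}\leq\hdgnorm{\cw}{\ast}\hdgnorm{\cv}{\ast}$ and $\abs{\T{3}}\leq\big[\C{1}h^{\delta}\sum_{\e{}} h_{\e{}}\norm{\bkap^{\ohalf}\grdh w}{0}{\bnd{\e{}}}^{2}\big]^{\ohalf}\hdgnorm{\cv}{\ast}$ (and symmetrically for $\T{4}$), and only then applies one final Cauchy--Schwarz inequality in $\setR^{3}$ to obtain the product $\big[2\hdgnorm{\cw}{\ast}^{2}+\C{1}h^{\delta}\sum_{\e{}} h_{\e{}}\norm{\bkap^{\ohalf}\grdh w}{0}{\bnd{\e{}}}^{2}\big]^{\ohalf}$ times the analogous bracket in $v$. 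At this stage each bracket is bounded by $\max(2,\C{1}h^{\delta})\tnorm{\cdot}^{2}$ directly from the definition \eqref{nrj_norm2}. This deferred Cauchy--Schwarz is precisely the device that converts the square-root prefactor on the cross terms into the linear factor $\C{1}h^{\delta}$ inside the $\max$. As a minor aside, your inequality $\sum_{\e{}}\sum_{\f{}\in\FE} h_{\e{}}\norm{\cdot}{0}{\f{}}^{2}\leq\eta_0\sum_{\e{}} h_{\e{}}\norm{\cdot}{0}{\bnd{\e{}}}^{2}$ is in fact an equality, so the $\eta_0$ you introduce there is spurious.
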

\begin{proof}
The bilinear form \eqref{bilinear_form} can be decomposed as follows:
\begin{align}
\abs{\dbilin{\mathcal{B}_h^{(\epsilon)}}{\cw}{\cv}}\leq \abs{\T{1} + \T{2}} + \abs{\T{3}} + \abs{\T{4}},
\end{align}
where each terms are given below by
\begin{align*}
    \T{1}\eqbydef& \inerprod{\bkap^{\ohalf}\grdh w}{\bkap^{\ohalf}\grdh v}{\Th},\\
    \T{2}\eqbydef&\dualprod{\tau^{\ohalf}\tjump{\cw}}{\tau^{\ohalf}\tjump{\cv}}{\partial\Th}, \\ \T{3}\eqbydef&\dualprod{\bkap^{\ohalf}\grdh w}{\bkap^{\ohalf}\tjump{\cv}}{\partial\Th}.
    %\T{4}\eqbydef&\dualprod{\bkap^{\ohalf}\grdh v}{\bkap^{\ohalf}\tjump{\cw}}{\partial\Th}.
\end{align*}
The last term $\T{4}$ is deduced from $\T{3}$ by permuting the role of $\cw$ and $\cv$, respectively. Thus, 
applying the Cauchy--Schwarz inequality, the first two terms can be bounded as follows:
%\begin{small}
\begin{subequations}
\begin{align*}
    \abs{\T{1}+\T{2}}\leq&\;[\norm{\bkap^{\ohalf}\grdh{w}}{0}{\Th}^2+\hdgseminorm{\cw}{\tau}^2]^{\ohalf}[\norm{\bkap^{\ohalf}\grdh{v}}{0}{\Th}^2+\hdgseminorm{\cv}{\tau}^2]^{\ohalf},\\
    \leq&\;\hdgnorm{\cw}{\ast}\hdgnorm{\cv}{\ast}.
\end{align*}
\end{subequations}
%\end{small}
%%\newline
%\noindent 
Proceeding as in the proof of Lemmata \ref{lemma_bound_consistency}, the third term can also be bounded as follows:
%\begin{subequations}
    \begin{align}
        \abs{\T{3}}\leq &\,\bigg[\C{1}h^{\delta}\sumT h_{\e{}}\norm{\bkap^{\ohalf}\grdh{w}}{0}{\bnd{\e{}}}^2\bigg]^{\ohalf}\hdgnorm{\cv}{\ast},%\\
        %\abs{\T{4}}\leq &\,\bigg[\C{1}h^{\delta}\sumT h_{\e{}}\norm{\bkap^{\ohalf}\grdh v}{0}{\bnd{\e{}}}^2\bigg]^{\ohalf}\hdgnorm{\cw}{\ast},
    \end{align}
%\end{subequations}
where $\C{1}\eqbydef\inv{(\alpha_{0}\C{tr}^2)}$, and similarly for the fourth term $\abs{\T{4}}$. Collecting these estimates, and finally using the Cauchy--Schwarz inequality, we thus obtain
\begin{align*}
\abs{\dbilin{\mathcal{B}_h^{(\epsilon)}}{\cw}{\cv}}
\leq & \,\bigg[2\hdgnorm{\cw}{\ast}^2+\C{1}h^{\delta}\sumT h_{\e{}}\norm{\bkap^{\ohalf}\grdh{w}}{0}{\bnd{\e{}}}^2\bigg]^{\ohalf}\times\\ & \,\bigg[2\hdgnorm{\cv}{\ast}^2+\C{1}h^{\delta}\sumT h_{\e{}}\norm{\bkap^{\ohalf}\grdh{v}}{0}{\bnd{\e{}}}^2\bigg]^{\ohalf}, \\
\leq &\, \max(2,\C{1}h^{\delta})\tnorm{\cw}\tnorm{\cv},
\end{align*}
which yields the assertion.
\end{proof}

\begin{remark}
\label{remark1}
Let us emphasize here that $\C{bnd}\leq\C{}h^{r_{\delta}}$, where $r_{\delta}=\min{(0,\delta)}$ and $\C{}\eqbydef2\max(2,\C{1})$ is a positive constant independent of $h$.
\end{remark}

\section{A priori error analysis}
\label{S:4}
We now derive \textit{a priori} error estimates in both the discrete energy- and $\norm{\cdot}{0}{\Th}$-norms. The first ingredient of our error analysis is a bound on the quantity $\cu-\proj{\cu}$ in the energy-norms where $\proj{\cu}$ denotes a suitable \textit{continuous} interpolant of the compact solution $\cu$ of the problem \eqref{weak_problem}. Different authors have previously employed this trick for the error analysis of DG methods (see, \eg \cite{wells2011analysis} for a detailed description) since it offers several advantages. To this end, we recall standard interpolation estimates that will be used extensively in the rest of the document (see, \eg\cite{dpe2011mathematical,ciarlet1991basic}). Let us consider $\phi\in\hilbert{s}{\domain}{}$ with $s\geq 2$, and we denote by $\proji{\phi}$ its \textit{continuous} interpolant of degree $k$. 
Thus, the following estimates hold,
\begin{subequations}
    \label{interp_estimates}
    %\begin{small}
    \begin{align}
        \label{interp_estimate1}
        \seminorm{\phi-\proji{\phi}}{q}{\Th}\leq\,\C{}h^{\mu-q}\seminorm{\phi}{\mu}{\Th},\quad\forall q\in\{0,\ldots,s-1\},\\
        \label{interp_estimate2}
        \bigg[\sumT h_{\e{}}^\alpha\norm{\grdh{(\phi-\proji{\phi})}}{0}{\bnd{\e{}}}^{2}\bigg]^{\ohalf}\leq\,\C{}h^{\mu+\frac{\alpha-3}{2}}\seminorm{\phi}{\mu}{\Th},
    \end{align}
    %\end{small}
\end{subequations}
where $\mu\eqbydef\min(k+1,s)$ and $k$ denotes the polynomial degree of the approximation space $\Vh{}$.
\begin{lemma}[Optimal error estimates]
\label{lemma_optimal_estimate}
Let $\cu\in\CV$ be the compact notation of the exact solution of the problem \eqref{weak_problem}. We denote by $\proj{\cu}\eqbydef(\proji{u},\projs{\tu})$ its continuous interpolant, %which is contained in $\CVh{}$; \ie $\proj{\cu}\in\CVh{}$. 
where $\proji{u}\in\Vh{}\cap C^{0}(\bar{\Omega})$ and $\projs{\tu}\eqbydef\proji{u}\vert_{\Fh}$ which is contained in $\TVh{}$. Setting $\cerp{\cu}{\pi}\eqbydef\cu-\proj{\cu}$ then, the following holds
\begin{align}
    \label{optimal_estimate}
    \hdgnorm{\cerp{\cu}{\pi}}{\ast}\;\;\textrm{(or equiv.)}\;\;\tnorm{\cerp{\cu}{\pi}}\leq\C{\bkap}h^{\mu-1}\seminorm{u}{\mu}{\Th},
\end{align}
where $\mu\eqbydef\min(k+1,s)$ and $\C{\bkap}\eqbydef\C{}\norm{\bkap^{\ohalf}}{\infty}{\domain}$.
\end{lemma}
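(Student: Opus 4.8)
The plan is to cash in the \emph{conformity} of the interpolant, which is the structural ingredient advertised in the abstract. Since $\proji{u}\in\Vh{}\cap C^{0}(\bar{\Omega})$ is single-valued across every interface and vanishes on $\bnddom$, and since the exact solution is itself continuous by the regularity assumption $u\in\hilbert{s}{\domain}{0}$ with $s>3/2$, the composite error $\cerp{\cu}{\pi}\eqbydef(u-\proji{u},\tu-\projs{\tu})$ has an identically vanishing jump on the mesh skeleton. I would prove this first: for all $\e{}\in\Th$ and $\f{}\in\FE$ the trace of $u-\proji{u}$ taken from $\e{}$ coincides with $(u-\proji{u})\vert_{\Fh}=\tu-\projs{\tu}$, hence $\tjump{\cerp{\cu}{\pi}}_{\e{},\f{}}=0$, and therefore $\hdgseminorm{\cerp{\cu}{\pi}}{\tau}=0$.

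With the jump seminorm gone, the definition \eqref{nrj_norm1} collapses to $\hdgnorm{\cerp{\cu}{\pi}}{\ast}^{2}=\norm{\bkap^{\ohalf}\grdh(u-\proji{u})}{0}{\Th}^{2}$. I would bound this by pulling out $\norm{\bkap^{\ohalf}}{\infty}{\domain}$ and invoking the interpolation estimate \eqref{interp_estimate1} with $q=1$, giving $\hdgnorm{\cerp{\cu}{\pi}}{\ast}\leq\norm{\bkap^{\ohalf}}{\infty}{\domain}\seminorm{u-\proji{u}}{1}{\Th}\leq\C{}\norm{\bkap^{\ohalf}}{\infty}{\domain}h^{\mu-1}\seminorm{u}{\mu}{\Th}$ with $\mu\eqbydef\min(k+1,s)$; this is already the asserted bound for $\hdgnorm{\cdot}{\ast}$. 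For the $\tnorm{\cdot}$-estimate, which is the stronger of the two, it only remains to control the extra boundary term in \eqref{nrj_norm2}, namely $\sumT h_{\e{}}\norm{\bkap^{\ohalf}\grdh(u-\proji{u})}{0}{\bnd{\e{}}}^{2}$: factoring out $\norm{\bkap^{\ohalf}}{\infty}{\domain}^{2}$ and applying the trace-type interpolation estimate \eqref{interp_estimate2} with $\alpha=1$ bounds it by $\C{}\norm{\bkap^{\ohalf}}{\infty}{\domain}^{2}h^{2(\mu-1)}\seminorm{u}{\mu}{\Th}^{2}$. Summing the two contributions and taking a square root yields $\tnorm{\cerp{\cu}{\pi}}\leq\C{\bkap}h^{\mu-1}\seminorm{u}{\mu}{\Th}$ with $\C{\bkap}\eqbydef\C{}\norm{\bkap^{\ohalf}}{\infty}{\domain}$, and the ``or equivalently'' clause follows at once from $\hdgnorm{\cdot}{\ast}\leq\tnorm{\cdot}$ (Lemma \ref{norm_equivalency}).

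Apart from the vanishing-jump observation the argument is pure bookkeeping, so that is the only step that requires any care: it is here that both hypotheses of the lemma are genuinely used, namely the regularity $u\in\hilbert{s}{\domain}{0}$ with $s>3/2$ (so that $u$ possesses a single-valued trace $\tu$ on $\Fh$) and the conforming nature of the interpolant, whose skeleton trace is exactly $\projs{\tu}=\proji{u}\vert_{\Fh}\in\TVh{}$ and whose homogeneous boundary values coincide with those of $u$. Everything downstream is the interpolation machinery \eqref{interp_estimate1}--\eqref{interp_estimate2}, the anisotropy entering only through the crude uniform factor $\norm{\bkap^{\ohalf}}{\infty}{\domain}$; in particular neither the parameter $\delta$ nor the penalty $\tau$ (beyond its nonnegativity) plays any role at this stage, which is why the bound is \emph{optimal} and $\delta$-independent.
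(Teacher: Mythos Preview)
Your proposal is correct and follows essentially the same route as the paper: observe that conformity of both $u$ and $\proji{u}$ annihilates the jump seminorm, then bound the remaining volume gradient term via \eqref{interp_estimate1} with $q=1$, and finally control the extra boundary contribution in $\tnorm{\cdot}$ via \eqref{interp_estimate2} with $\alpha=1$. Your additional commentary on why neither $\delta$ nor $\tau$ enters is accurate and matches the paper's subsequent remark.
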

\begin{proof}
Setting $\cerp{\cu}{\pi}\eqbydef(\erp{u}{\pi},\terp{u}{\pi})$ where $\erp{u}{\pi}\eqbydef u-\proji{u}$ and $\terp{u}{\pi}\eqbydef \tu-\projs{\tu}$, and using the definition of the $\hdgnorm{\cdot}{\ast}$-norm \eqref{nrj_norm1} yields
%\begin{small}
\begin{align}
    \label{nrjnorm1-interpolant}
    \hdgnorm{\cerp{\cu}{\pi}}{\ast}^2 = \norm{\bkap^{\ohalf}\grdh{\erp{u}{\pi}}}{0}{\Th}^2+\hdgseminorm{\cerp{\cu}{\pi}}{\tau}^2.
\end{align}
%\end{small}
The last term of \eqref{nrjnorm1-interpolant}, \ie the jump semi-norm of $\cerp{\cu}{\pi}$, is null according to the conformity of the (continuous) interpolant $\proj{\cu}$, \ie $\norm{\erp{u}{\pi}-\terp{u}{\pi}}{0}{\bnd{\e{}}}=0$ (see, \eg \cite[Lemma 5.5, p. 102]{wells2011analysis}). Thus, successively using the Cauchy--Schwarz inequality, and the interpolation estimate \eqref{interp_estimate1} yields 
\begin{align}
    \label{nrj1-estimate-int}
    \hdgnorm{\cerp{\cu}{\pi}}{\ast}^2\leq \norm{\bkap^{\ohalf}}{\infty}{\domain}^2\seminorm{\erp{u}{\pi}}{1}{\Th{}}^2
    \leq \C{}^2\norm{\bkap^{\ohalf}}{\infty}{\domain}^2h^{2\mu-2} \seminorm{u}{\mu}{\Th}^2.
\end{align}
The proof of the second estimate follows by the same arguments. Successively using  the definition of the continuity norm \eqref{nrj_norm2}, and the estimates \eqref{interp_estimate2} (with $\alpha=1$) and \eqref{nrj1-estimate-int}, we thus infer that
%\begin{small}
\begin{align*}
\tnorm{\cerp{\cu}{\pi}}^2&\overset{\eqref{nrj_norm2}}{\eqbydef}\hdgnorm{\cerp{\cu}{\pi}}{\ast}^2+\sumT h_{\e{}}\norm{\bkap^{\ohalf}\grdh{\erp{u}{\pi}}}{0}{\bnd{\e{}}}^2,\\
&\overset{\eqref{interp_estimates}}{\leq} \norm{\bkap^{\ohalf}}{\infty}{\domain}^2\sumT(\seminorm{ \erp{u}{\pi}}{1}{\e{}}^2+ h_{\e{}}\norm{\grdh{\erp{u}{\pi}}}{0}{\bnd{\e{}}}^2),\\
&\hspace{0.08cm}\leq \C{}^2\norm{\bkap^{\ohalf}}{\infty}{\domain}^2h^{2\mu-2} \seminorm{u}{\mu}{\Th}^2,
\end{align*}
%\end{small}
which yields the assertion.
\end{proof}
\begin{remark}
We emphasize the optimality of the estimate \eqref{optimal_estimate} given in Lemma \ref{lemma_optimal_estimate}, independently of $\delta$. This is due to the continuous nature of the interpolant $\proj{\cu}$, which by conformity belongs to the kernel of stabilization terms whose contributions can be suboptimal. Let us specify that this assertion is no longer ensured by using discontinuous interpolants. This estimate is in agreement with the one established by Wells in \cite{wells2011analysis} for $\delta=0$.
\end{remark}

\subsection{Energy-norms error estimates}

We now derive an error estimation of the discrete composite variable $\cdu$ in the natural energy-norms.
\begin{theorem}[Energy-norm estimates]
\label{energy_norm_estimate}
Let $\cu\in\CV$ be the compact notation of the exact solution of the problem \eqref{weak_problem}. We denote by $\cdu\in\CVh{}$ the approximate solution of the discrete problem (\ref{discrete_problem}). Setting  $\cer{\cu}\eqbydef\cu-\cdu$ then, for any value of the parameter $\delta$, the following estimates hold:
%\begin{subequations}
\begin{align}
    \label{nrj1-estimate}
    \hdgnorm{\cer{\cu}}{\ast}\;\;\textrm{(or equiv.)}\;\;\tnorm{\cer{\cu}} &\leq \C{\bkap}h^{\mu+r_{\delta}-1}\seminorm{u}{\mu}{\Th},%\\
    %\label{nrj2-estimate}
    %\tnorm{\cu-\cdu}&\leq \C{\bkap}h^{\mu_{0}+r}\seminorm{u}{\mu}{\Th},
\end{align}
%\end{subequations}
where $\mu\eqbydef\min(k+1,s)$, $r_{\delta}\eqbydef\min(0,\delta)$, and $\C{\bkap}\eqbydef\C{}\norm{\bkap^{\ohalf}}{\infty}{\domain}$.
\end{theorem}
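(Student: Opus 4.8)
The plan is to follow the classical Strang-type argument for nonconforming/hybridized methods, using the continuous interpolant $\proj{\cu}$ as an intermediate quantity. First I would split the error via the triangle inequality,
\begin{equation*}
\tnorm{\cer{\cu}} \leq \tnorm{\cu - \proj{\cu}} + \tnorm{\proj{\cu} - \cdu} = \tnorm{\cerp{\cu}{\pi}} + \tnorm{\cde{\cu}},
\end{equation*}
where I set $\cde{\cu}\eqbydef\proj{\cu}-\cdu\in\CVh{}$. The first term is already controlled by Lemma~\ref{lemma_optimal_estimate}, which gives the optimal bound $\C{\bkap}h^{\mu-1}\seminorm{u}{\mu}{\Th}$, with no $\delta$-dependence. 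So the whole game is to estimate the discrete component $\cde{\cu}$, and this is where the factor $h^{r_\delta}$ will enter.

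Next I would use coercivity (Lemma~\ref{lemma_coercivity}), valid since $\alpha_0$ is chosen large enough: $\tfrac12\hdgnorm{\cde{\cu}}{\ast}^2 \leq \dbilin{\mathcal{B}_h^{(\cdot)}}{\cde{\cu}}{\cde{\cu}}$. Then I would rewrite the right-hand side using Galerkin orthogonality (Proposition~\ref{proposition_galerkin_ortho}): since $\dbilin{\mathcal{B}_h^{(\cdot)}}{\cu-\cdu}{\cde{\cu}}=0$, we get $\dbilin{\mathcal{B}_h^{(\cdot)}}{\proj{\cu}-\cdu}{\cde{\cu}} = \dbilin{\mathcal{B}_h^{(\cdot)}}{\proj{\cu}-\cu}{\cde{\cu}} = -\dbilin{\mathcal{B}_h^{(\cdot)}}{\cerp{\cu}{\pi}}{\cde{\cu}}$. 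Hence
\begin{equation*}
\tfrac12\hdgnorm{\cde{\cu}}{\ast}^2 \leq \abs{\dbilin{\mathcal{B}_h^{(\cdot)}}{\cerp{\cu}{\pi}}{\cde{\cu}}}.
\end{equation*}
Now I would invoke boundedness (Lemma~\ref{lemma_boundedness}) on $\CVast\times\CVast$, which applies because $\cerp{\cu}{\pi}\in\CVast$ and $\cde{\cu}\in\CVh{}\subset\CVast$: this bounds the right-hand side by $\C{bnd}\tnorm{\cerp{\cu}{\pi}}\tnorm{\cde{\cu}}$. Combining with the norm equivalence (Lemma~\ref{norm_equivalency}), $\hdgnorm{\cde{\cu}}{\ast}$ and $\tnorm{\cde{\cu}}$ are comparable up to the shape constant $\rho$, so after dividing by $\tnorm{\cde{\cu}}$ I obtain $\tnorm{\cde{\cu}} \leq \C{}\,\C{bnd}\,\tnorm{\cerp{\cu}{\pi}}$.

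Finally I would assemble the pieces: using Remark~\ref{remark1}, $\C{bnd}\leq\C{}h^{r_\delta}$ with $r_\delta=\min(0,\delta)$, and using Lemma~\ref{lemma_optimal_estimate} for $\tnorm{\cerp{\cu}{\pi}}$, the discrete component satisfies $\tnorm{\cde{\cu}}\leq\C{\bkap}h^{r_\delta}h^{\mu-1}\seminorm{u}{\mu}{\Th}$. Since $r_\delta\leq 0$ and $h\leq1$, this term dominates the interpolation term $\tnorm{\cerp{\cu}{\pi}}=\C{\bkap}h^{\mu-1}\seminorm{u}{\mu}{\Th}$, so the triangle inequality yields $\tnorm{\cer{\cu}}\leq\C{\bkap}h^{\mu+r_\delta-1}\seminorm{u}{\mu}{\Th}$, and the $\hdgnorm{\cdot}{\ast}$ estimate follows again from norm equivalence. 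The only genuinely delicate point is making sure the boundedness lemma is applied to a pair in $\CVast\times\CVast$ rather than $\CVh{}\times\CVh{}$ — that is exactly why the enriched norm $\tnorm{\cdot}$ and the space $\CVast$ were introduced — together with tracking that it is precisely the $\C{bnd}$ constant (not coercivity) that degrades by $h^{r_\delta}$ when $\delta<0$; everything else is a routine chaining of the already-established lemmas.
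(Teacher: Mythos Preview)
Your proposal is correct and follows essentially the same route as the paper: triangle inequality with the continuous interpolant $\proj{\cu}$, then coercivity $+$ Galerkin orthogonality $+$ boundedness on $\CVast\times\CVast$ to control the discrete part, norm equivalence to pass between $\hdgnorm{\cdot}{\ast}$ and $\tnorm{\cdot}$, and finally Remark~\ref{remark1} together with Lemma~\ref{lemma_optimal_estimate} to extract the $h^{\mu+r_\delta-1}$ rate. The only cosmetic difference is that the paper runs the triangle inequality in $\hdgnorm{\cdot}{\ast}$ and arrives at the prefactor $(1+2\rho\C{bnd})$ explicitly, whereas you work directly in $\tnorm{\cdot}$; the logic and the ingredients are identical.
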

\begin{proof}
%We decompose this quantity as $\cu-\cdu=\cu-\proj{\cu}+\proj{\cu}-\cdu$.
By using the triangle inequality for the definition of the stability energy-norm \eqref{nrj_norm1}, we easily infer that
\begin{equation}
\label{triangle_inequality}
\hdgnorm{\cu-\cdu}{\ast} \leq \hdgnorm{\cu-\proj{\cu}}{\ast} + \hdgnorm{\proj{\cu}-\cdu}{\ast}.
\end{equation}
Only an upper bound on the last term of \eqref{triangle_inequality} remains to be established. Successively using the coercivity, Galerkin orthogonality, and boundedness, we then deduce that
\begin{align*}
\frac{1}{2}\hdgnorm{\proj{\cu}-\cdu}{\ast}^2 &\overset{\eqref{coercivity}}{\leq} \dbilin{\mathcal{B}_h^{(\epsilon)}}{\proj{\cu}-\cdu}{\proj{\cu}-\cdu},\\
&\overset{\eqref{galerkin_ortho}}{=} \dbilin{\mathcal{B}_h^{(\epsilon)}}{\proj{\cu}-\cu}{\proj{\cu}-\cdu},\\ &\overset{\eqref{boundedness}}{\leq} \C{bnd}\tnorm{\cu-\proj{\cu}}\tnorm{\proj{\cu}-\cdu}.
\end{align*}
Finally, considering that $\proj{\cu}-\cdu\in\CVh{}$ and using Lemma \ref{norm_equivalency}, we obtain an upper bound of this term;
\begin{equation}
    \label{up-bound-proj}
    \hdgnorm{\proj{\cu}-\cdu}{\ast}\leq 2\rho\C{bnd}\tnorm{\cu-\proj{\cu}}.
\end{equation}
Inserting into \eqref{up-bound-proj} into \eqref{triangle_inequality}, we then infer that
\begin{align*}
\hdgnorm{\cu-\cdu}{\ast}\leq(1+2\rho\C{bnd})\tnorm{\cu-\proj{\cu}}.
\end{align*}
The proof of the second estimate (in the continuity-norm \eqref{nrj_norm2}) follows by the same arguments. Finally proceeding as in Remark \ref{remark1}, we can conclude that there exists a constant $\C{}>0$ such that $1+2\rho\C{bnd}\leq \C{}h^{r_{\delta}}$, that we combine with the optimal error estimate \eqref{optimal_estimate} given in Lemma \ref{lemma_optimal_estimate}, hence yielding to the assertion. 
\end{proof}
\begin{corollar}[Strong-regularity solutions]
\label{Corrolar_Strong}
Besides the hypotheses of Theorem \ref{energy_norm_estimate}, assume $u\in\hilbert{k+1}{\domain}{0}$. Then, we have the following estimate
\begin{equation}
\label{strong_regularity_estimate}
\hdgnorm{\vect{u}-\vect{\du}}{\ast} \leq \C{\bkap}h^{k+r_{\delta}}\seminorm{u}{k+1}{\Th}.
\end{equation}
where $\C{\bkap}\eqbydef\C{}\norm{\bkap^{\ohalf}}{\infty}{\domain}$.
\end{corollar}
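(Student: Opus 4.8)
The plan is to read off the statement as an immediate specialization of Theorem~\ref{energy_norm_estimate}, so essentially no new machinery is required. The one thing to check first is that the strengthened hypothesis $u\in\hilbert{k+1}{\domain}{0}$ permits the choice $s=k+1$ in the extended regularity requirement underpinning the whole error analysis (recall we assumed $u\in\hilbert{s}{\domain}{0}\cap\hilbert{2}{\Th}{}$ with $s>3/2$). Indeed, for $k\ge 1$ one has $k+1\ge 2>3/2$, and the global $\hilbert{k+1}{\domain}{}$-regularity subsumes the broken $\hilbert{2}{\Th}{}$-regularity; hence the interpolation estimates \eqref{interp_estimates}, Lemma~\ref{lemma_optimal_estimate}, and therefore Theorem~\ref{energy_norm_estimate}, all hold with $\mu\eqbydef\min(k+1,s)=k+1$.

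Second, I would simply insert $\mu=k+1$ into the energy-norm bound \eqref{nrj1-estimate}. This replaces the exponent $\mu+r_\delta-1$ by $(k+1)+r_\delta-1=k+r_\delta$ and the seminorm $\seminorm{u}{\mu}{\Th}$ by $\seminorm{u}{k+1}{\Th}$, which yields
\[
\hdgnorm{\vect{u}-\vect{\du}}{\ast}\le\C{\bkap}h^{k+r_\delta}\seminorm{u}{k+1}{\Th},\qquad\C{\bkap}\eqbydef\C{}\norm{\bkap^{\ohalf}}{\infty}{\domain},
\]
which is exactly the claimed estimate. The equivalent control in the $\tnorm{\cdot}$-norm comes for free from the ``(or equiv.)'' clause of Theorem~\ref{energy_norm_estimate}, \ie from the norm equivalence of Lemma~\ref{norm_equivalency}.

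Since the corollary is obtained purely by substitution, there is no genuine obstacle; the only mild subtlety is the bookkeeping in the low-order case $k=0$, where $\hilbert{k+1}{\domain}{0}=\hilbert{1}{\domain}{0}$ does not by itself deliver the $\hilbert{2}{\Th}{}$-regularity used upstream. In that regime one either restricts to $k\ge 1$ (the case where the corollary is of interest) or reads the hypothesis together with the standing assumption $u\in\hilbert{2}{\Th}{}$, $s>3/2$; the resulting exponent $k+r_\delta=r_\delta$ is then the expected reduced order, and none of the coercivity (Lemma~\ref{lemma_coercivity}), boundedness (Lemma~\ref{lemma_boundedness}) or Galerkin-orthogonality (Proposition~\ref{proposition_galerkin_ortho}) arguments needs to be revisited.
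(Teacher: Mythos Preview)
Your proposal is correct and matches the paper's approach exactly: the paper's own proof is simply ``(Evident)'', and you have spelled out the obvious substitution $\mu=k+1$ into the estimate of Theorem~\ref{energy_norm_estimate}. Your extra remarks on the regularity bookkeeping and the $k=0$ case are harmless elaborations but not needed for the corollary as stated.
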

\begin{proof}
(Evident)
\end{proof}
\begin{remark}
Following Di Pietro and Ern \cite[Theorem 4.53, p. 160]{dpe2011mathematical}, since $\C{}$ in Corollary \ref{Corrolar_Strong} is independent of $\bkap$, the discrete method is said to be robust with respect to diffusion heterogeneities (observing that the energy-norms depend on $\bkap$). The given estimate \eqref{strong_regularity_estimate} indicates that the order of convergence in the energy-norm is linear and $\delta$-dependent, \ie suboptimal if $\delta< 0$ and optimal otherwise. This is in agreement with the estimate given by Fabien \al in the particular case, $\delta=0$ \cite[Theorem 3.5, p. 8]{FabienNM2019}.
\end{remark}

\subsection{$\LL{2}$-norm error estimate}

Using a standard Aubin--Nitsche duality argument, we now derive an improved $\LL{2}$-error estimate of the H-IP method in terms of the parameter $\delta$. To this end, we define an auxiliary function $\psi$ as the solution of the adjoint problem:
\begin{equation*}
\label{adjoint_problem}
- \dvg (\bkap \grd{\psi}) = u-\du  \quad\textrm{in }\domain,\quad\textrm{and}\quad \psi=0\quad \textrm{on }\bnd{\domain}.
\end{equation*}
By assuming elliptic regularity, the following estimate holds:
\begin{equation}
\norm{\psi}{2}{\domain} \leq \C{\bkap}\norm{u-\du}{0}{\domain},
\end{equation}
where $\C{\bkap}$ depends on the shape regularity (\ie the convexity) of $\domain$ and the distribution of $\bkap$ inside it \cite{ern2009discontinuous}. The weak-adjoint problem is to find $\psi\in\hilbert{2}{\domain}{}\cap\hilbert{1}{\domain}{0}$ such that 
%\begin{small}
\begin{align}
\label{weak_adjoint_problem}
\inerprod{\bkap \grdh{\psi}}{\grdh{v}}{\Th} - \dualprod{\bkap \grdh{\psi}\cdot\normal}{v}{\bnd{\Th}}=\inerprod{u-\du}{v}{\Th},
\end{align}
%\end{small}
for all $v\in\hilbert{1}{\domain}{0}$. Let us now introduce the composite error variable $\cer{\cu}\eqbydef\cu-\cdu=(\er{u},\ter{u})$ where $\er{u}\eqbydef u-\du$ and $\ter{u}\eqbydef \tu-\dtu$.
By setting now $v\eqbydef\er{u}$ in \eqref{weak_adjoint_problem}, we obtain
%\begin{tiny}
\begin{align}
\label{weak_form_adjoint}
\norm{\er{u}}{0}{\Th}^2=\inerprod{\bkap \grdh{\psi}}{\grdh{\er{u}}}{\Th} -\dualprod{\bkap \grdh{\psi}}{\er{u}\normal}{\bnd{\Th}}.
\end{align}
%\end{tiny}
From the regularity of the variables $\tu$, $\dtu$ and $\psi$, we deduce that $\dualprod{\bkap \grdh{\psi}}{\ter{u}\normal}{\bnd{\Th}}=0$. By embedding this condition in \eqref{weak_form_adjoint}, we obtain an equivalent reformulation of the weak-adjoint problem in terms of the discrete bilinear operator $\mathcal{B}_h^{(\epsilon)}$:
\begin{align}
\label{L2_error_initial}
\norm{\er{u}}{0}{\Th}^2 &= \inerprod{\bkap \grd{\psi}}{\grd{\er{u}}}{\Th} - \dualprod{\bkap \grd{\psi}}{\tjump{\cer{\cu}}}{\bnd{\Th}},\nonumber\\
&=\dbilin{\mathcal{B}_h^{(\epsilon)}}{\cpsi}{\cer{\cu}},
\end{align}
where $\cpsi\eqbydef(\psi,\hat{\psi})$. Following the definition of the bilinear form $\mathcal{B}_h^{(\epsilon)}$ \eqref{bilinear_form} and using the Galerkin orthogonality $\dbilin{\mathcal{B}_h^{(\epsilon)}}{\cer{\cu}}{\proj{\cpsi}}=0$, since $\proj{\cpsi}\in\CVh{}$ (see Proposition \ref{proposition_galerkin_ortho}), we easily infer %we infer that
\begin{align}
\dbilin{\mathcal{B}_h^{(\epsilon)}}{\cpsi}{\cer{\cu}}&=\dbilin{\mathcal{B}_h^{(\epsilon)}}{\cer{\cu}}{\cerp{\cpsi}{\pi}}-(1-\epsilon)\dualprod{\bkap \grd{\psi}}{\tjump{\cer{u}}}{\bnd{\Th}},\nonumber\\
&\eqbydef\mathcal{T}_{1}-(1-\epsilon)\mathcal{T}_2,
\end{align}
where $\cerp{\cpsi}{\pi}\eqbydef\cpsi-\proj{\cpsi}$. We will now determine an upper bound of the quantity $\norm{\er{u}}{0}{\Th}^2$. Owing to Lemmas \ref{lemma_boundedness} and \ref{lemma_optimal_estimate} and using the regularity assumption $\psi\in\hilbert{2}{\domain}{}$, we can bound the first term $\mathcal{T}_1$:
\begin{align}
    \abs{\mathcal{T}_{1}} \leq \; \C{bnd}\tnorm{\cerp{\cpsi}{\pi}}\tnorm{\cer{\cu}}\leq \; \C{\bkap}\C{bnd}h\norm{\psi}{2}{\domain}\tnorm{\cer{\cu}}.
    \label{adjoint_T1}
\end{align}
Using the trace inequality $\norm{\grdh{\psi}}{0}{\bnd{\Th}}\leq \C{}h^{-\ohalf}\norm{\psi}{2}{\domain}$ \cite{ciarlet1991basic}, the second term $\mathcal{T}_2$ can be bounded as follows:
\begin{align}
    \abs{\mathcal{T}_2} \leq  \C{\bkap}h^{\frac{1+\delta}{2}}\norm{\grdh\psi}{0}{\bnd{\Th}}\hdgseminorm{\cer{\cu}}{\tau}\leq\,\C{\bkap}h^{\frac{\delta}{2}}\norm{\psi}{2}{\domain}\tnorm{\cer{\cu}}.
    \label{adjoint_T2}
\end{align}
Combining \eqref{adjoint_T1} and \eqref{adjoint_T2}, we obtain the estimate
\begin{equation}
\norm{u-\du}{0}{\Th} \leq \C{\bkap}(\C{bnd}h+(1-\epsilon) h^{\frac{\delta}{2}})\tnorm{\cer{\cu}},
\label{AN_estimates}
\end{equation}
and we can assert the theorem below.
\begin{theorem}[$\LL{2}$-norm error estimate]
\label{L2_norm_estimate}
Besides the hypotheses of Theorem \ref{energy_norm_estimate} then, we have the estimate
%Let $\cu\in\CV$ be the compact notation of the exact solution of the problem \eqref{weak_problem}. We denote by $\cdu\in\CVh{}$ the approximate solution of the discrete problem (\ref{discrete_problem}). Then, for any value of the parameter $\delta$, the following estimate holds:
\begin{align}
    \norm{u-\du}{0}{\Th} & \leq \C{\bkap} h^{\mu+s^{(\epsilon)}_{\delta}} \seminorm{u}{\mu}{\Th},
    \label{L2_norm_estimates}
\end{align}
where the constant $\C{\bkap}$ depends on the shape regularity of $\domain$ and the distribution of $\bkap$ inside it, and $s^{(\epsilon)}_{\delta}$ is only dependent on $\epsilon$ and $\delta$ and is given by
\begin{align}
    \label{order_L2}
    s^{(\epsilon)}_{\delta}\eqbydef\begin{cases}\min(0,2\delta)\equiv 2r_{\delta} & \textrm{if}\quad\epsilon=1,\\
    \min(0,\delta/2-1) & \textrm{if}\quad\epsilon\neq 1\quad\textrm{and}\quad\delta\geq 0,\\
    \min(2\delta,3\delta/2-1) & \textrm{if}\quad\epsilon\neq 1\quad\textrm{and}\quad\delta< 0.
    \end{cases}
\end{align}
\end{theorem}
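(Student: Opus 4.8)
The plan is to combine the abstract estimate \eqref{AN_estimates} with the energy-norm bound from Theorem \ref{energy_norm_estimate} and then perform a case analysis on $\epsilon$ and the sign of $\delta$. Specifically, Theorem \ref{energy_norm_estimate} gives $\tnorm{\cer{\cu}}\leq \C{\bkap}h^{\mu+r_{\delta}-1}\seminorm{u}{\mu}{\Th}$ with $r_{\delta}=\min(0,\delta)$, and Remark \ref{remark1} gives $\C{bnd}\leq \C{}h^{r_{\delta}}$. Substituting both into \eqref{AN_estimates} yields
\begin{equation*}
\norm{u-\du}{0}{\Th}\leq \C{\bkap}\bigl(h^{r_{\delta}}\cdot h + (1-\epsilon)h^{\delta/2}\bigr)h^{\mu+r_{\delta}-1}\seminorm{u}{\mu}{\Th},
\end{equation*}
so the exponent of $h$ multiplying $h^{\mu}$ is the minimum of the two contributions $1+r_{\delta}+(r_{\delta}-1)=2r_{\delta}$ and (when $\epsilon\neq 1$) $\delta/2+(r_{\delta}-1)=\delta/2+r_{\delta}-1$. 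It then remains to simplify $\min(2r_{\delta},\,\delta/2+r_{\delta}-1)$ according to the three regimes.

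First I would treat the symmetric case $\epsilon=1$: the term $(1-\epsilon)\mathcal{T}_2$ vanishes, so the only contribution is $2r_{\delta}=\min(0,2\delta)$, which is exactly the first branch of \eqref{order_L2}. Next, for $\epsilon\neq 1$ and $\delta\geq 0$ we have $r_{\delta}=0$, so the exponent is $\min(0,\,\delta/2-1)$, giving the second branch. Finally, for $\epsilon\neq 1$ and $\delta<0$ we have $r_{\delta}=\delta$, so the exponent is $\min(2\delta,\,\delta/2+\delta-1)=\min(2\delta,\,3\delta/2-1)$, which is the third branch. In each case one absorbs all the $h$-independent multiplicative constants (including $\rho$, $\C{bnd}$'s prefactor, the interpolation constant, and $\norm{\bkap^{\ohalf}}{\infty}{\domain}$) into the single constant $\C{\bkap}$, and invokes the elliptic regularity estimate $\norm{\psi}{2}{\domain}\leq\C{\bkap}\norm{u-\du}{0}{\domain}$ to cancel one power of $\norm{u-\du}{0}{\Th}$ from both sides (noting $\norm{u-\du}{0}{\domain}=\norm{u-\du}{0}{\Th}$ since these are the same quantity on $\Omega=\cup\e{}$).

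The only genuinely delicate point — already handled in the displayed derivation preceding the statement — is the bound on $\mathcal{T}_2$ in \eqref{adjoint_T2}: one must recognize the factor $h^{(1+\delta)/2}$ coming from $\tau^{-1/2}$ together with the non-conforming trace estimate $\norm{\grdh\psi}{0}{\bnd{\Th}}\leq\C{}h^{-1/2}\norm{\psi}{2}{\domain}$, and observe that the jump seminorm $\hdgseminorm{\cer{\cu}}{\tau}$ is controlled by $\tnorm{\cer{\cu}}$. Once \eqref{AN_estimates} is in hand the rest is the bookkeeping of exponents described above, so the main obstacle is purely organizational: carefully verifying that the three formulas in \eqref{order_L2} are the correct simplifications of $\min(2r_{\delta},\,\delta/2+r_{\delta}-1)$ (and of $2r_{\delta}$ alone when $\epsilon=1$) in each regime, with no sign errors. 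I would present the proof as: "Combining \eqref{AN_estimates}, Theorem \ref{energy_norm_estimate} and Remark \ref{remark1}, \dots" followed by the three-line case distinction.
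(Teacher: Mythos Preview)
Your proposal is correct and follows exactly the route the paper takes: the paper's own proof is a one-sentence reference to combining \eqref{AN_estimates}, the bound on $\C{bnd}$ from Lemma \ref{lemma_boundedness} (equivalently Remark \ref{remark1}), and the $\tnorm{\cdot}$-estimate of Theorem \ref{energy_norm_estimate}, followed by ``some algebraic manipulations'' --- which are precisely the three-case simplification of $\min(2r_{\delta},\,\delta/2+r_{\delta}-1)$ that you spell out. One minor note: the elliptic-regularity cancellation you mention is already absorbed in \eqref{AN_estimates} (as you yourself observe), so you need not re-invoke it in the proof proper.
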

\begin{proof}
The estimate \eqref{L2_norm_estimates} using \eqref{order_L2} follows after some algebraic manipulations from the previous equation \eqref{AN_estimates}, the definition of $\C{bnd}$ given in Lemma \ref{lemma_boundedness} and the error estimate (in the $\tnorm{\cdot}$-norm \eqref{nrj_norm2}) given in Theorem \ref{energy_norm_estimate}. 
\end{proof}
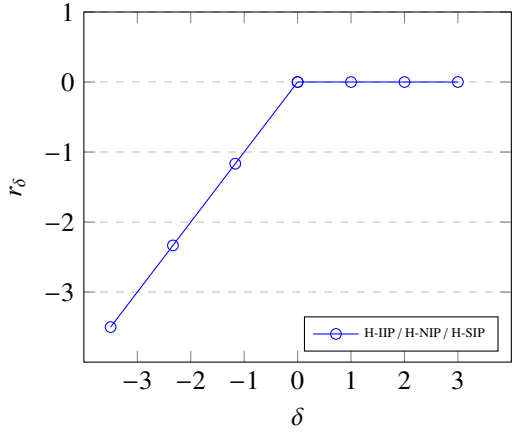
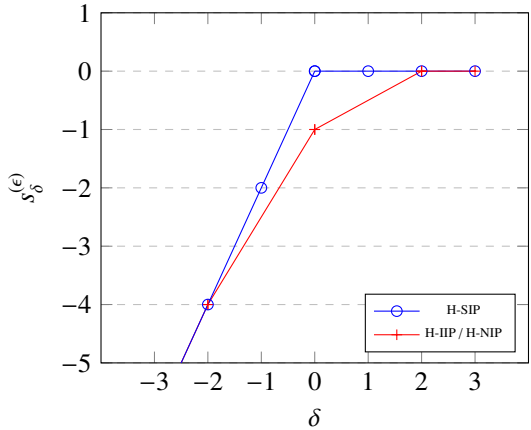
\begin{figure}[!ht]
\centering
\subfloat[]{\begin{tikzpicture}
\begin{axis}[
    xlabel={$\delta$},
    ylabel={$r_{\delta}$},
    xmin=-4, xmax=4,
    ymin=-4, ymax=1,
    xtick={-3,-2,-1,0,1,2,3},
    ytick={-3,-2,-1,0,1},
    legend pos=south east,
    ymajorgrids=true,
    grid style=dashed,
    scale=0.95]
\addplot[domain=0:3,mark=o, mark size=2,samples=4,color=blue]{0};
\addplot[domain=-3.5:0,mark=o, mark size=2,samples=4,color=blue]{x};
\addlegendentry{\tiny{H-IIP / H-NIP / H-SIP}}
\end{axis}
\end{tikzpicture}
}\\
\subfloat[]{\begin{tikzpicture}
\begin{axis}[
    xlabel={$\delta$},
    ylabel={$s^{(\epsilon)}_{\delta}$},
    xmin=-4, xmax=4,
    ymin=-5, ymax=1,
    xtick={-3,-2,-1,0,1,2,3},
    ytick={-5,-4,-3,-2,-1,0,1},
    legend pos=south east,
    ymajorgrids=true,
    grid style=dashed,
    scale=0.95]
    \addplot[domain=0:3,mark=o, mark size=2,samples=4, color=blue]{0};
    \addlegendentry{\tiny{H-SIP}}
    \addlegendentry{\tiny{H-IIP / H-NIP}}
    \addplot[domain=2:3,mark=+, mark size=2,samples=2, color=red]{0};
    \addplot[domain=0:2,mark=+, mark size=2,samples=2,color=red]{x/2-1};
    \addplot[domain=-2:0,mark=+, mark size=2,samples=2,color=red]{3*x/2-1};
    \addplot[domain=-3:-2,mark=+, mark size=2,samples=2,color=red]{2*x};
    \addplot[domain=-3:0,mark=o, mark size=2,samples=4,color=blue]{2*x}; % duplicated 
\end{axis}
\end{tikzpicture}
}
\caption{Representation of the quantities $r_{\delta}$ and $s^{(\epsilon)}_{\delta}$ vs. $\delta$ given in Theorems \ref{energy_norm_estimate} and \ref{L2_norm_estimate}, respectively.}
\end{figure}
Let us emphasize that H-IP methods inherit similar asymptotic behaviors than their standard IPDG counterparts. Due to the lack of symmetry of both H-IIP and H-NIP schemes, the a priori error estimates in $\LL{2}$ are optimal only if $\delta\geq 2$.
We also point out that the estimate given in Theorem \ref{L2_norm_estimate} is in agreement with previous results established by different authors in the literature in the specific case $\delta=0$ (see, \eg\cite[Theorem 3.6, p. 9]{FabienNM2019} and \cite[Lemma 5.5, p. 103]{wells2011analysis} for the H-SIP method).

\begin{remark}
The authors are certain that these estimates given in Theorems \ref{energy_norm_estimate} and \ref{L2_norm_estimate} have already been established in the literature, but we have not been able to find them.  
\end{remark}

\section{Numerical experiments}
\label{S:5}

In the previous sections, we built families of hybridizable interior penalty methods based on an adaptive definition of the penalty parameter that depends on several coefficients. This section highlights
%Editor: On the line below, please ensure that the intended meaning has been maintained in this edit.
the benefit these methods provide
in the approximation of diffusion problems with anisotropic and/or discontinuous coefficients and in the validation of a priori error estimates. %In the rest of the document, we assume that the local length scale $h_{\f{}}$ in  \eqref{penalty_term} is chosen to be equal to the diameter of the associated element, \ie $h_{\f{}}\eqbydef h_{\e{}}$, for all $\e{}\in\Th$ and for all $\f{}\in\FE$. 
All numerical experiments are performed using the high-performance finite element library \textsc{NGSolve} \cite{Schoberl2014c++}. Then, the physical domain is taken to be a unit square---\ie $\domain\eqbydef[0,1]^2\subset\mathbb{R}^2$---and the right-hand-side $f$ is chosen such that the given exact solution $u$ respecting the homogeneous boundary conditions is verified. We use a sequence of subdivisions $\Th$, where regular triangles or squares form each partition (see, \eg Figure \ref{mesh_representation}). Standard $h$- and $k$-refinement strategies are used to compute the numerical errors and estimated convergence rates (ECRs). To pursue our quantitative analysis, we first measure the impact of the parameter $\delta$ on the a posteriori error estimates. Second, we point out the crucial role of the factor $\kappa_{n}$ arising in \eqref{penalty_term} for the robustness of the H-IP methods when the medium becomes highly anisotropic and/or discontinuous. Finally, we complete our experiments by pointing out some unexpected benefits of the value of $\alpha_{0}$ for the ECRs of the H-SIP scheme.
\begin{figure}[!ht]
\centering
\subfloat[\label{mesh1}]{\includegraphics[scale=0.10,trim = 500 130 500 130, clip=true]{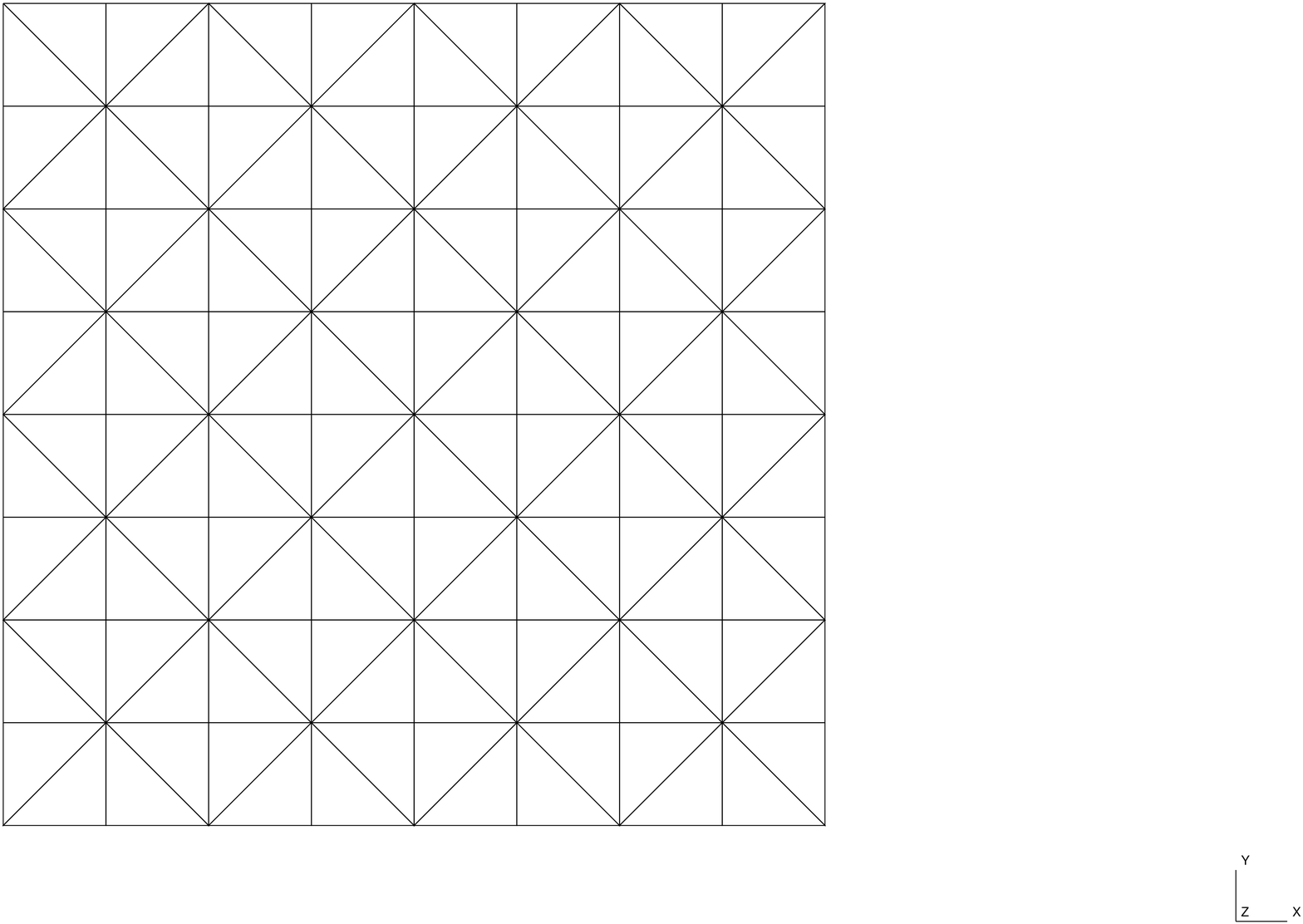}} \quad
\subfloat[\label{mesh2}]{\includegraphics[scale=0.10,trim = 500 130 500 130, clip=true]{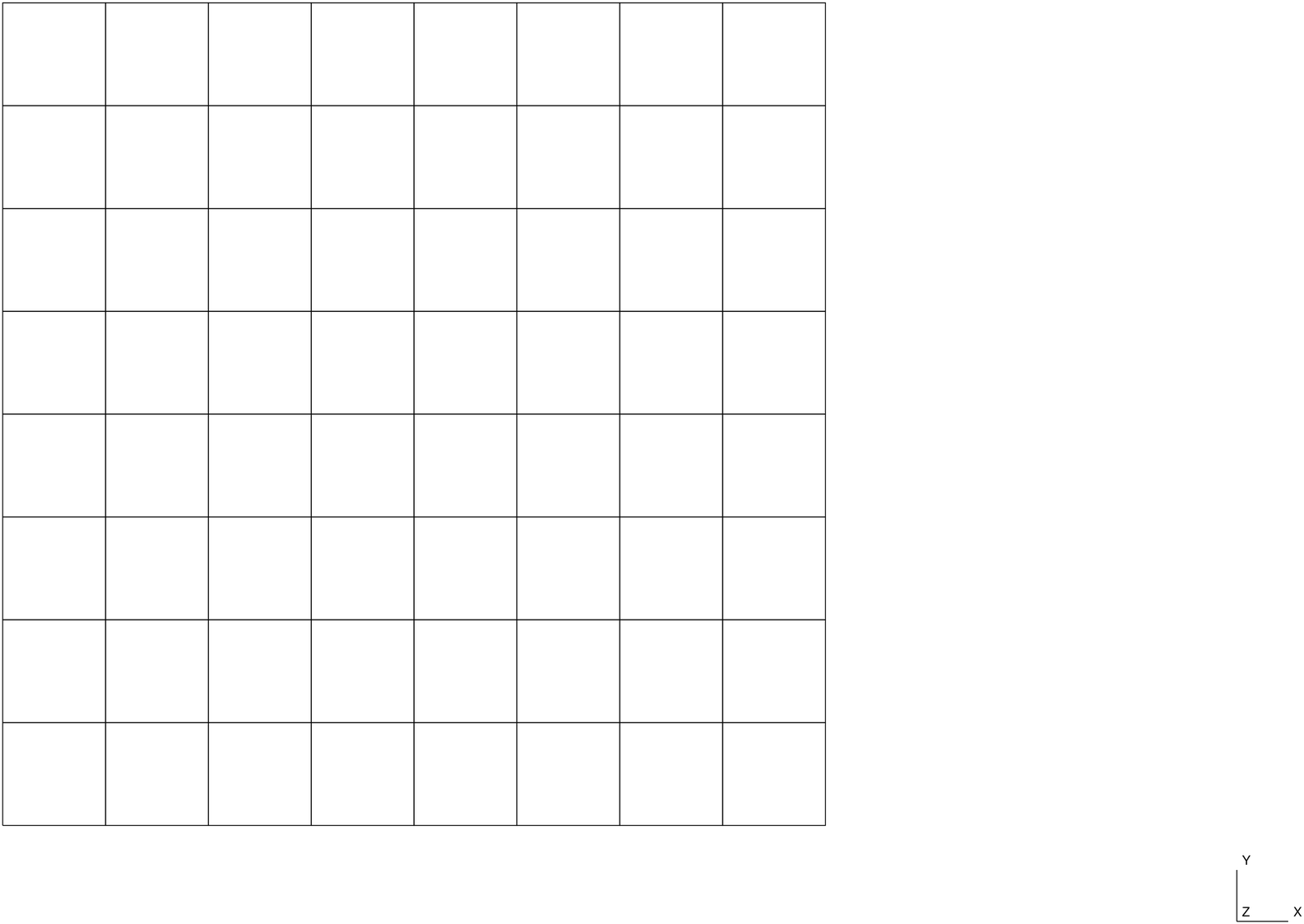}}
\caption{Uniform triangular (a) and square (b) meshes with $h=1/8$, respectively.}
\label{mesh_representation}
\end{figure}

\subsection{Test A: Influence of the parameter $\delta$}

We consider the following test case, which was previously proposed in Fabien \al \cite{FabienNM2019}: the diffusion tensor is homogeneous and isotropic---$\bkap \eqbydef \bm{I_2}$  (identity matrix)---and the exact smooth solution is given by $u(x,y)=xy(1-x)(1-y)\exp(-x
^2-y^2)$. Then, for all $\e{}\in\Th$ and for all $\f{}\in\FE$, we assume that the penalty parameter has the following simplified form:
\begin{equation}
\tau_{\e{},\f{}} \eqbydef 
\dfrac{\tau_0}{h_{\e{}}^{1+\delta}},%\quad\forall \e{}\in\Th\quad\textrm{and}\quad\forall\f{}\in\FE,
\end{equation}
where $\tau_0>0$ is a positive constant chosen to be large enough in accordance with Lemma \ref{lemma_coercivity}. The objective here is to measure the impact of the parameter $\delta$ on the ECRs in both the $\LL{2}$- and energy-norms. A history of convergence is shown in Figures \ref{convergence1} ($\tnorm{\cdot}$-norm) and \ref{convergence2} ($\norm{\cdot}{0}{\Th}$-norm) for uniform triangular meshes and for polynomial degrees $k\in\{1,\cdots,3\}$. % and Table \ref{table1} summarizes our numerical observations.\par
%\begin{table}[p]
%\small
%\centering
%\caption{Test A: a summary of the ECRs in the $\LL{2}$- and energy-norm of H-IP methods in terms of the parameter $\delta$ and the polynomial parity $k$.}
%\label{table1}
%\resizebox{\textwidth}{!}{
%\begin{tabular}{|cl|ccc|ccc|cc|}
%\hline
%& & \multicolumn{3}{c|}{H-IIP} & \multicolumn{3}{c|}{H-NIP} & \multicolumn{2}{c|}{H-SIP} \\
%\hline
%Degree & Norm & $\delta=-1$ & $\delta=0$ & $\delta\geq 2$ & $\delta=-1$ & $\delta=0$ & $\delta\geq 2$ & $\delta=-1$ & $\delta=0$  \\
%\hline
%\multirow{2}{*}{$k=1$}
%& $\norm{\cdot}{0}{\Th}$ & 1.0 & \textcolor{blue}{2.0} & -- & 1.0 & \textcolor{blue}{2.0} & -- & 1.0 & \textcolor{blue}{2.0} \\
%& $\tnorm{\cdot}$ & 1.0 & \textcolor{blue}{1.0} & -- & 1.0 & \textcolor{blue}{1.0} & -- & 1.0 & \textcolor{blue}{1.0} \\
%\hline
%\multirow{2}{*}{ odd $k$}
%& $\norm{\cdot}{0}{\Th}$ & $k$ & \textcolor{blue}{$k+1$} & -- & \textcolor{blue}{$k+1$} & -- & -- & \textcolor{blue}{$k+1$} & -- \\
%& $\tnorm{\cdot}$ & $k-1$ & \textcolor{blue}{$k$} & -- & \textcolor{blue}{$k$} & -- & -- & \textcolor{blue}{$k$} & -- \\
%\hline
%\multirow{2}{*}{even $k$}
%& $\norm{\cdot}{0}{\Th}$ & $k-1$ & $k$ & \textcolor{blue}{$k+1$} & $k$ & $k$ & \textcolor{blue}{$k+1$} & \textcolor{blue}{$k+1$} & -- \\
%& $\tnorm{\cdot}$ & $k-1$ & $k$ & \textcolor{blue}{$k$} &  $k$ & $k$ & \textcolor{blue}{$k$} & \textcolor{blue}{$k$} & -- \\
%\hline
%\end{tabular}%}
%\end{table}
%\noindent
As expected,
%Editor: On the line below, please ensure that the intended meaning has been maintained in this edit.
these observations are in agreement
with theoretical estimates and underline that the stabilization parameter $\delta$ influences the convergence rate. In particular, we recover some well-known estimates if $\delta=0$. First, we notice that the convergence of the H-IP method in the energy-norm is linearly $\delta$-dependent if $\delta\leq 0$ and optimal if $\delta\geq 0$, which is in accordance with Lemma \ref{energy_norm_estimate} (see \figref{convergence1}). A brief analysis of the convergence in the $\LL{2}$-norm indicates that both the H-IIP and H-NIP schemes behave differently from the H-SIP scheme. Nonsymmetric variants are strongly influenced by the polynomial parity of $k$ and by the penalty parameter $\delta$. We observe that the convergence rate increases linearly and optimally if $\delta\geq 0$ for odd $k$ and $\delta\geq 2$ for even $k$. In this last case, let us point out that the optimal convergence is nearly reached once $\delta\geq 1$. As expected, the symmetric scheme converges optimally when $\delta\geq 0$. These results agree with the theoretical results established in Theorem \ref{L2_norm_estimate}.

%%%%% Convergence energy-norm
%\newpage
\begin{figure}[!ht]
\centering
\subfloat[]{\begin{tikzpicture}[scale=0.61]
\begin{loglogaxis}[ylabel={$\tnorm{u-u_h}$}, legend cell align=left, legend pos = south east, font=\small]

\addplot[color=Sepia,mark=o] coordinates { (1/16,6.0e-03) (1/32,3.0e-03) (1/64,1.5e-03) (1/128,7.4e-04)};
		
\addplot[color=BrickRed,mark=triangle] coordinates { (1/16,1.5e-03) (1/32,5.2e-04) (1/64,1.8e-04) (1/128,6.3e-05) };

\addplot[color=ForestGreen,mark=square] coordinates { (1/16,3.3e-04) (1/32,8.3e-05) (1/64,2.1e-05) (1/128,5.2e-06) };

\draw[color=black,dashed] (axis cs:1/64,1.5e-03) |- (axis cs:1/128,7.4e-04) node[near start,right]{$1.0$};% node[near end,above]{$1$};

\draw[color=black,dashed] (axis cs:1/64,1.8e-04) |- (axis cs:1/128,6.3e-05) node[near start,right]{$1.5$};% node[near start,below]{$1$};

\draw[color=black,dashed] (axis cs:1/64,2.1e-05) |- (axis cs:1/128,5.2e-06) node[near start,right]{$2.0$};% node[near end,above]{$1$};

\legend{$\delta=-1$, $\delta=-1/2$, $\delta=0$, $\delta=1/2$}

\end{loglogaxis}
\end{tikzpicture} }
\subfloat[]{\begin{tikzpicture}[scale=0.61]
\begin{loglogaxis}[legend cell align=left, legend pos = south east, font=\small]

\addplot[color=Sepia,mark=o] coordinates {(1/8,7.3e-04) (1/16,1.6e-04) (1/32,3.8e-05) (1/64,9.1e-06) };
		
\addplot[color=BrickRed,mark=triangle] coordinates {(1/8,2.9e-04) (1/16,4.5e-05) (1/32,7.2e-06) (1/64,1.2e-06) };

\addplot[color=ForestGreen,mark=square] coordinates {(1/8,1.4e-04) (1/16,1.7e-05) (1/32,2.0e-06) (1/64,2.5e-07) };	

\draw[color=black,dashed] (axis cs:1/32,3.8e-05) |- (axis cs:1/64,9.1e-06) node[near start,right]{$2.0$};

%\draw[color=black,dashed] (axis cs:1/32,7.2e-06) |- (axis cs:1/64,1.2e-06) node[near start,right]{$2.5$};

\draw[color=black,dashed] (axis cs:1/32,2.0e-06) |- (axis cs:1/64,2.5e-07) node[near start,right]{$3.0$};

\legend{$\delta=-1$, $\delta=-1/2$, $\delta=0$}

\end{loglogaxis}
\end{tikzpicture} }\\
\subfloat[]{\begin{tikzpicture}[scale=0.61]
\begin{loglogaxis}[ylabel={$\tnorm{u-u_h}$}, legend cell align=left, legend pos = south east, font=\small]

\addplot[color=Sepia,mark=o] coordinates { (1/16,8.5e-04) (1/32,2.2e-04) (1/64,5.6e-05) (1/128,1.4e-05)};
		
\addplot[color=BrickRed,mark=triangle] coordinates { (1/16,7.7e-04) (1/32,2.0e-04) (1/64,5.1e-05) (1/128,1.3e-05)};

\addplot[color=ForestGreen,mark=square] coordinates { (1/16,5.2e-04) (1/32,1.3e-04) (1/64,3.2e-05) (1/128,8.1e-06)};

\draw[color=black,dashed] (axis cs:1/64,3.2e-05) |- (axis cs:1/128,8.1e-06) node[near start,right]{$2.0$};% node[near end,above]{$1$};

\legend{$\delta=-1$, $\delta=-1/2$, $\delta=0$}

\end{loglogaxis}
\end{tikzpicture} }
\subfloat[]{\begin{tikzpicture}[scale=0.61]
\begin{loglogaxis}[legend cell align=left, legend pos = south east, font=\small]

\addplot[color=Sepia,mark=o] coordinates {(1/8,2.4e-04) (1/16,3.3e-05) (1/32,4.4e-06) (1/64,5.6e-07) };
		
\addplot[color=BrickRed,mark=triangle] coordinates {(1/8,1.8e-04) (1/16,2.5e-05) (1/32,3.4e-06) (1/64,4.2e-07) };

\addplot[color=ForestGreen,mark=square] coordinates {(1/8,1.2e-04) (1/16,1.6e-05) (1/32,1.9e-06) (1/64,2.4e-07) };	

%\draw[color=black,dashed] (axis cs:1/32,3.8e-05) |- (axis cs:1/64,9.1e-06) node[near start,right]{$2.0$};

%\draw[color=black,dashed] (axis cs:1/32,7.2e-06) |- (axis cs:1/64,1.2e-06) node[near start,right]{$2.5$};

\draw[color=black,dashed] (axis cs:1/32,2.0e-06) |- (axis cs:1/64,2.5e-07) node[near start,right]{$3.0$};

\legend{$\delta=-1$, $\delta=-1/2$, $\delta=0$}

\end{loglogaxis}
\end{tikzpicture} }\\
\subfloat[]{\begin{tikzpicture}[scale=0.61]
\begin{loglogaxis}[ylabel={$\tnorm{u-u_h}$}, legend cell align=left, legend pos = south east, font=\small]

% rate ...,3,3,3
		
\addplot[color=Sepia,mark=o] coordinates { (1/16,7.6e-04) (1/32,1.9e-04) (1/64,4.8e-05) (1/128,1.2e-05)};
		
\addplot[color=BrickRed,mark=triangle] coordinates { (1/16,7.7e-04) (1/32,2.0e-04) (1/64,5.0e-05) (1/128,1.2e-05)};

\addplot[color=ForestGreen,mark=square] coordinates { (1/16,4.5e-04) (1/32,1.1e-04) (1/64,2.8e-05) (1/128,6.9e-06)};

\draw[color=black,dashed] (axis cs:1/64,2.8e-05) |- (axis cs:1/128,6.9e-06) node[near start,right]{$2.0$};% node[near end,above]{$1$};

\legend{$\delta=-1$, $\delta=-1/2$, $\delta=0$}

\end{loglogaxis}
\end{tikzpicture} }
\subfloat[]{\begin{tikzpicture}[scale=0.61]
\begin{loglogaxis}[legend cell align=left, legend pos = south east, font=\small]

% 4

\addplot[color=Sepia,mark=o] coordinates {(1/8,8.2e-04) (1/16,9e-05) (1/32,1.0e-05) (1/64,1.0e-06) };
		
\addplot[color=BrickRed,mark=triangle] coordinates {(1/8,8.0e-04) (1/16,8.9e-05) (1/32,9.9e-06) (1/64,1.1e-06) };

\addplot[color=ForestGreen,mark=square] coordinates {(1/8,2.2e-04) (1/16,2.8e-05) (1/32,3.5e-06) (1/64,4.3e-07) };	

%\draw[color=black,dashed] (axis cs:1/32,3.8e-05) |- (axis cs:1/64,9.1e-06) node[near start,right]{$2.0$};

%\draw[color=black,dashed] (axis cs:1/32,7.2e-06) |- (axis cs:1/64,1.2e-06) node[near start,right]{$2.5$};

\draw[color=black,dashed] (axis cs:1/32,3.5e-06) |- (axis cs:1/64,4.3e-07) node[near start,right]{$3.0$};

\legend{$\delta=-1$, $\delta=-1/2$, $\delta=0$}

\end{loglogaxis}
\end{tikzpicture} }
\caption{Test A: from the top to the bottom: history of convergence in the $\tnorm{\cdot}$-norm (vs. $h$) of the H-IIP (a-b), H-NIP (c-d) and H-SIP (e-f) schemes, respectively, on uniform triangular meshes with $-1\leq\delta\leq 0$. In the left images (a-c-e), $k=2$, and in the right images (b-d-f), $k=3$.}
\label{convergence1}
\end{figure}

%%%%% Convergence L2-norm

\begin{figure}[!ht]
\centering
\subfloat[]{\begin{tikzpicture}[scale=0.61]
\begin{loglogaxis}[ylabel={$\norm{u-u_h}{0}{\Th}$},legend cell align=left, legend pos = south east, font=\tiny]
		
\addplot[color=Sepia,mark=o] coordinates {(1/16,1.9e-03) (1/32,9.5e-04) (1/64,4.8e-04) (1/128,2.4e-04) };
		
\addplot[color=BrickRed,mark=triangle] coordinates {(1/16,1.0e-04) (1/32,2.6e-05) (1/64,6.4e-06) (1/128,1.6e-06) };

\addplot[color=ForestGreen,mark=square] coordinates {(1/16,8.1e-06) (1/32,1.2e-06) (1/64,1.7e-07) (1/128,2.2e-08) };

\addplot[color=MidnightBlue,mark=diamond] coordinates {(1/16,3.4e-06) (1/32,4.3e-07) (1/64,5.4e-08) (1/128,6.8e-09) };

\draw[color=black,dashed] (axis cs:1/64,4.5e-04) |- (axis cs:1/128,2.2e-04) node[near start,right]{$1.0$};

\draw[color=black,dashed] (axis cs:1/64,6.4e-06) |- (axis cs:1/128,1.6e-06) node[near start,right]{$2.0$};

\draw[color=black,dashed] (axis cs:1/64,5.4e-08) |- (axis cs:1/128,6.8e-09) node[near start,right]{$3.0$};

\legend{$\delta=-1$, $\delta=0$, $\delta=1$, $\delta=2$}

\end{loglogaxis}
\end{tikzpicture} }
\subfloat[]{\begin{tikzpicture}[scale=0.61]
\begin{loglogaxis}[legend cell align=left, legend pos = south east, font=\tiny]

\addplot[color=Sepia,mark=o] coordinates {(1/8,7.1e-05) (1/16,8.9e-06) (1/32,1.1e-06) (1/64,1.4e-07) };
		
\addplot[color=BrickRed,mark=triangle] coordinates {(1/8,9.2e-06) (1/16,5.8e-07) (1/32,3.7e-08) (1/64,2.3e-09) };

\addplot[color=ForestGreen,mark=square] coordinates {(1/8,9.8e-07) (1/16,6.5e-08) (1/32,3.4e-09) (1/64,2.0e-10) };	

\addplot[color=MidnightBlue,mark=diamond] coordinates { (1/8,8.4e-07) (1/16,5.3e-08) (1/32,3.3e-09) (1/64,2.1e-10) };

\draw[color=black,dashed] (axis cs:1/32,1.1e-06) |- (axis cs:1/64,1.4e-07) node[near start,right]{$3.0$};

\draw[color=black,dashed] (axis cs:1/32,3.3e-09) |- (axis cs:1/64,2.1e-10) node[near start,right]{$4.0$};

\legend{$\delta=-1$, $\delta=0$, $\delta=1$, $\delta=2$}

\end{loglogaxis}
\end{tikzpicture} }\\
\subfloat[]{\begin{tikzpicture}[scale=0.61]
\begin{loglogaxis}[ylabel={$\norm{u-u_h}{0}{\Th}$},legend cell align=left, legend pos = south east, font=\tiny]

\addplot[color=Sepia,mark=o] coordinates {(1/16,1.3e-04) (1/32,3.5e-05) (1/64,9.0e-06) (1/128,2.3e-06) };
		
\addplot[color=BrickRed,mark=triangle] coordinates {(1/16,5.7e-05) (1/32,1.4e-05) (1/64,3.6e-06) (1/128,8.9e-07) };

\addplot[color=ForestGreen,mark=square] coordinates {(1/16,1.2e-05) (1/32,2.0e-06) (1/64,3.0e-07) (1/128,4.2e-08) };

\addplot[color=MidnightBlue,mark=diamond] coordinates {(1/16,3.6e-06) (1/32,4.4e-07) (1/64,5.5e-08) (1/128,6.8e-09) };

\draw[color=black,dashed] (axis cs:1/64,3.6e-06) |- (axis cs:1/128,8.9e-07) node[near start,right]{$2.0$};

%\draw[color=black,dashed] (axis cs:1/64,6.4e-06) |- (axis cs:1/128,1.6e-06) node[near start,right]{$2.0$};

\draw[color=black,dashed] (axis cs:1/64,5.5e-08) |- (axis cs:1/128,6.8e-09) node[near start,right]{$3.0$};

\legend{$\delta=-1$, $\delta=0$, $\delta=1$, $\delta=2$}

\end{loglogaxis}
\end{tikzpicture} }
\subfloat[]{\begin{tikzpicture}[scale=0.61]
\begin{loglogaxis}[legend cell align=left, legend pos = south east, font=\tiny]

\addplot[color=Sepia,mark=o] coordinates {(1/8,1.1e-05) (1/16,8.2e-07) (1/32,5.5e-08) (1/64,3.5e-09) };
		
\addplot[color=BrickRed,mark=triangle] coordinates {(1/8,5.2e-06) (1/16,3.3e-07) (1/32,2.1e-08) (1/64,1.3e-09) };

\addplot[color=ForestGreen,mark=square] coordinates {(1/8,1.2e-06) (1/16,6.8e-08) (1/32,4.e-09) (1/64,2.4e-10) };	

\addplot[color=MidnightBlue,mark=diamond] coordinates { (1/8,9.9e-07) (1/16,5.6e-08) (1/32,3.4e-09) (1/64,2.1e-10) };

%\draw[color=black,dashed] (axis cs:1/32,1.1e-06) |- (axis cs:1/64,1.4e-07) node[near start,right]{$3.0$};

\draw[color=black,dashed] (axis cs:1/32,3.3e-09) |- (axis cs:1/64,2.1e-10) node[near start,right]{$4.0$};

\legend{$\delta=-1$, $\delta=0$, $\delta=1$, $\delta=2$}

\end{loglogaxis}
\end{tikzpicture} }\\
\subfloat[]{\begin{tikzpicture}[scale=0.61]
\begin{loglogaxis}[ylabel={$\norm{u-u_h}{0}{\Th}$},legend cell align=left, legend pos = south east, font=\tiny]

% rate ...,3,3,3
		
\addplot[color=Sepia,mark=o] coordinates { (1/16,1.2e-05) (1/32,1.5e-06) (1/64,1.9e-07) (1/128,2.4e-08)};
		
\addplot[color=BrickRed,mark=triangle] coordinates { (1/16,4.3e-06) (1/32,5.3e-07) (1/64,6.7e-08) (1/128,8.3e-09) };

%\addplot[color=ForestGreen,mark=square] coordinates {(1/4,1.5e-04) (1/8,2.2e-05) (1/16,3.1e-06) (1/32,4.1e-07) (1/64,5.3e-08) };

%\addplot[color=MidnightBlue,mark=diamond] coordinates {(1/4,1.9e-04) (1/8,2.7e-05) (1/16,3.4e-06) (1/32,4.4e-07) (1/64,5.5e-08) };

%\draw[color=black,dashed] (axis cs:1/64,1.9e-07) |- (axis cs:1/32,1.5e-06) node[near start,left]{$3$} node[near end,above]{$1$};

\draw[color=black,dashed] (axis cs:1/64,6.7e-08) |- (axis cs:1/128,8.3e-09) node[near start,right]{$3.0$};% node[near start,below]{$1$};

\legend{$\delta=-1$, $\delta=0$}

\end{loglogaxis}
\end{tikzpicture} }
\subfloat[]{\begin{tikzpicture}[scale=0.61]
\begin{loglogaxis}[legend cell align=left, legend pos = south east, font=\tiny]

% 4

\addplot[color=Sepia,mark=o] coordinates {(1/8,3.5e-05) (1/16,2.1e-06) (1/32,1.3e-07) (1/64,8.1e-09) };
		
\addplot[color=BrickRed,mark=triangle] coordinates {(1/8,3.4e-06) (1/16,2.1e-07) (1/32,1.3e-08) (1/64,8.3e-10) };

%\addplot[color=ForestGreen,mark=square] coordinates {(1/4,1.1e-05) (1/8,8.0e-07) (1/16,5.2e-08) (1/32,3.3e-09) (1/64,2.1e-10) };

%\addplot[color=MidnightBlue,mark=diamond] coordinates {(1/4,1.4e-05) (1/8,8.8e-07) (1/16,5.4e-08) (1/32,3.4e-09) (1/64,2.1e-10) };

%\draw[color=black,dashed] (axis cs:1/64,2.2e-09) |- (axis cs:1/32,1.1e-07) node[near start,left]{$4.6$} node[near end,above]{$1$};

\draw[color=black,dashed] (axis cs:1/64,8.3e-10) -| (axis cs:1/32,1.3e-08) node[near end,right]{$4.0$} node[near start,below]{$1$};

\legend{$\delta=-1$, $\delta=0$, $\delta=1$, $\delta=2$}

\end{loglogaxis}
\end{tikzpicture} }
\caption{Test A: from the top to the bottom: history of convergence in the $\LL{2}$-norm (vs. $h$) of the H-IIP (a-b), H-NIP (c-d) and H-SIP (e-f) schemes, respectively, on uniform triangular meshes with $-1\leq\delta\leq 2$. In the left images (a-c-e), $k=2$, and in the right images (b-d-f), $k=3$.}
\label{convergence2}
\end{figure}

\subsection{Test B: Influence of the parameter $\kappa_{\e{},\f{}}$}
\label{Numerical_infl_Kf}

In the second experiment, we analyze the behavior of the discretization method in the context of genuine anisotropic and heterogeneous properties. Then, the unit square $\domain$ is split into four subdomains $\domain_1 = [0,1/2]^2$, $\domain_2 = [1/2,1]\times[0,1/2]$, $\domain_3 = [1/2,1]^2$ and $\domain_4 = [0,1/2]\times[1/2,1]$, such that $\Omega\eqbydef\cup^{4}_{i=1}\Omega_i$ as illustrated in \figref{TestB-fig-descrip}. The exact solution on the whole domain $\domain$ is given by $u(x,y) = \sin(\pi x) \sin(\pi y)$, and the diffusivity tensor takes different values in each subregion:
%\begin{subequations}
\begin{align*}
    \bkap &= \begin{bmatrix}
    1 & 0 \\
    0 & \lambda
    \end{bmatrix} \quad &\textrm{for } (x,y) \in \domain_1, \, \domain_3,\\% \quad\textrm{and}\\%\quad
    \bkap &= \begin{bmatrix}
    \inv{\lambda} & 0 \\
    0 & 1
    \end{bmatrix} \quad &\textrm{for } (x,y) \in \domain_2, \, \domain_4,
\end{align*}
%\end{subequations}
where the parameter $\lambda>0$ simultaneously controls both the anisotropy and the medium heterogeneity.
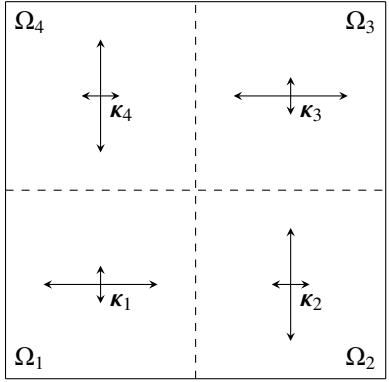
\begin{figure}[!ht]
    \centering
\begin{tikzpicture}[scale=0.50]
	    \draw (0,0) -- (10,0);
    	\draw (10,0) -- (10,10);
    	\draw (10,10) -- (0,10);	
    	\draw (0,10) -- (0,0);
    	
    	\draw[dashed] (0,5) -- (10,5);
    	\draw[dashed] (5,0) -- (5,10);
    	
    	\draw (0,0) node[above right] {$\Omega_1$};
    	\draw (10,0) node[above left] {$\Omega_2$};
    	\draw (10,10) node[below left] {$\Omega_3$};
    	\draw (0,10) node[below right] {$\Omega_4$};
    	
    	\draw[<-> , > = stealth] (1,2.5) -- (4,2.5);
    	\draw[<-> , > = stealth] (2.5,2) -- (2.5,3);
    	\draw (2.5,2.5) node[below right] {$\bkap_{1}$};
    	%\draw (2.5,3.5) node[right] {$K_{22}$};
    	
    	\draw[<-> , > = stealth] (7,2.5) -- (8,2.5);
    	\draw[<-> , > = stealth] (7.5,1) -- (7.5,4);
    	\draw (7.5,2.5) node[below right] {$\bkap_{2}$};
    	%\draw (7.5,4) node[right] {$K_{22}$};
    	
    	\draw[<-> , > = stealth] (6,7.5) -- (9,7.5);
    	\draw[<-> , > = stealth] (7.5,7) -- (7.5,8);
    	\draw (7.5,7.5) node[below right] {$\bkap_{3}$};
    	%\draw (7.5,8.5) node[right] {$K_{22}$};
    	
    	\draw[<-> , > = stealth] (2,7.5) -- (3,7.5);
    	\draw[<-> , > = stealth] (2.5,6) -- (2.5,9);
    	\draw (2.5,7.5) node[below right] {$\bkap_{4}$};
    	%\draw (2.5,9) node[right] {$K_{22}$};
	\end{tikzpicture}
	\caption{Description of test case B with genuine anisotropic and heterogeneous properties.}
    \label{TestB-fig-descrip}
\end{figure}
\noindent Here, we focus on the influence of the parameter $\kappa_{\e{},\f{}}$ on the robustness of the discretization method in the context of highly anisotropic and heterogeneous coefficients, and we choose $\lambda=10^{-3}$. In this context, the anisotropy and heterogeneity ratios are approximately $10^3$ and $10^6$, respectively. For the simulations, we consider a conforming triangular mesh ($h=1/32$) respecting the discontinuities of $\bkap$, we use piecewise linear approximations of the discrete variable $\du$, and we set $\delta=0$ in the definition of the penalty parameter \eqref{penalty_term}. Here, the comparisons are only graphical (\figref{Test B}). We depict the discrete solutions $\du$ obtained successively using $\kappa_{\e{},\f{}}\eqbydef 1$ (Case 1) and $\kappa_{\e{},\f{}}\eqbydef\vect{n}_{\e{},\f{}}\bkap_{\e{}}\vect{n}_{\e{},\f{}}$ (Case 2) for all variations of $\epsilon\in\{0,\pm 1\}$. In the first situation (Figures \ref{testBa}, \ref{testBc} and \ref{testBe}), the discrete solutions exhibit spurious oscillations and erratic behaviors, thus violating the discrete maximum principle. This can be easily explained by observing that the first formulation does not distinguish between the principal directions of the diffusivity tensor. Consequently, a misestimated penalty is applied in directions of low or high diffusivity. In the second situation (Figures \ref{testBb}, \ref{testBd} and \ref{testBf}), the jumps in diffusivity are better captured at the interfaces of discontinuities, and the discrete solutions are significantly more robust, \ie exhibit less erratic behavior.\par
\begin{figure}[!ht]
\centering
%\small{\textbf{H-IIP}}\hspace{3cm}\small{\textbf{H-NIP}}\hspace{3cm}\small{\textbf{H-SIP}}
\subfloat[\label{testBa}]{\includegraphics[scale=0.16,trim = 445 100 325 110, clip=true]{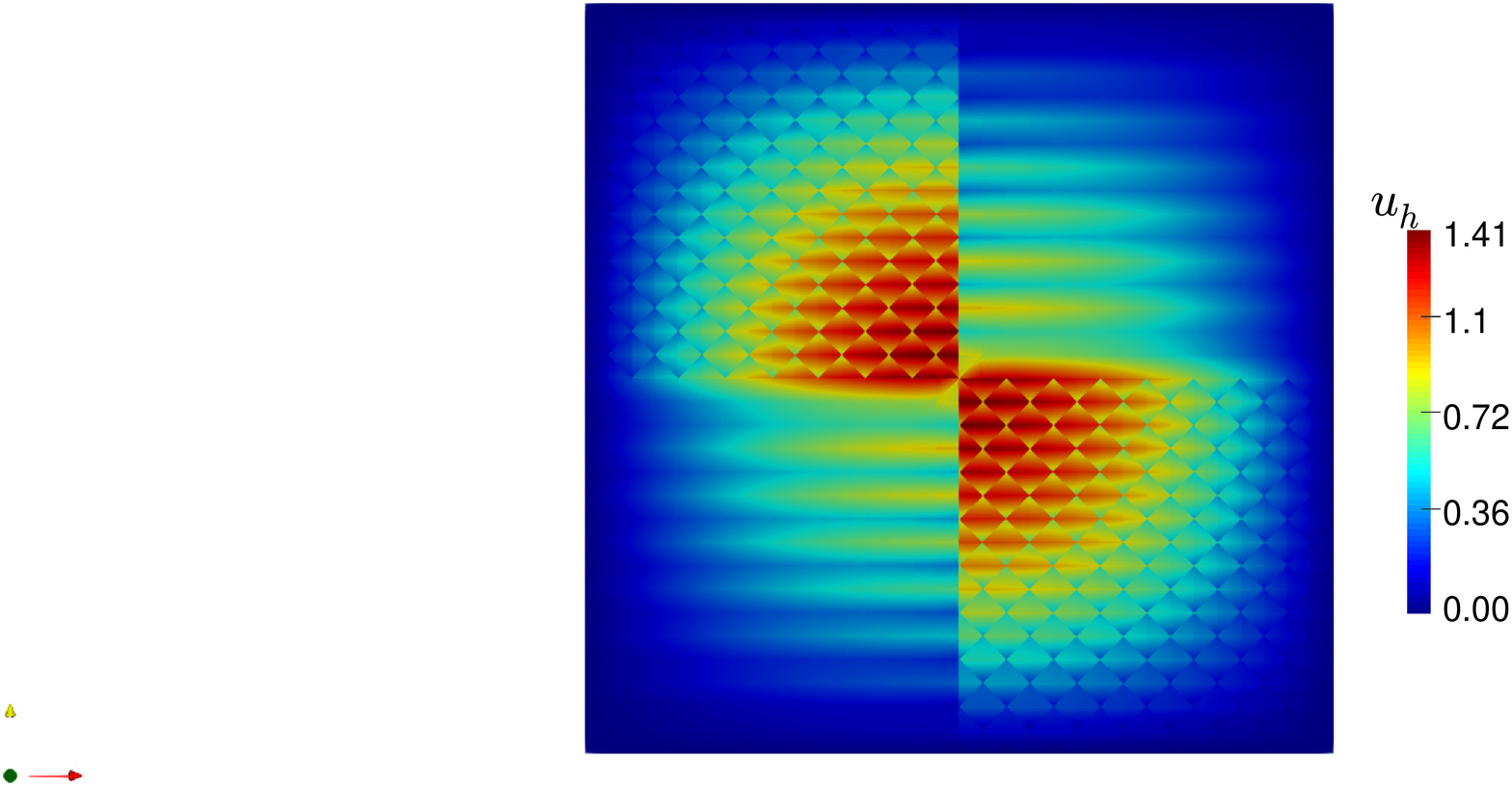}}
\subfloat[\label{testBb}]{\includegraphics[scale=0.16,trim = 445 100 325 110, clip=true]{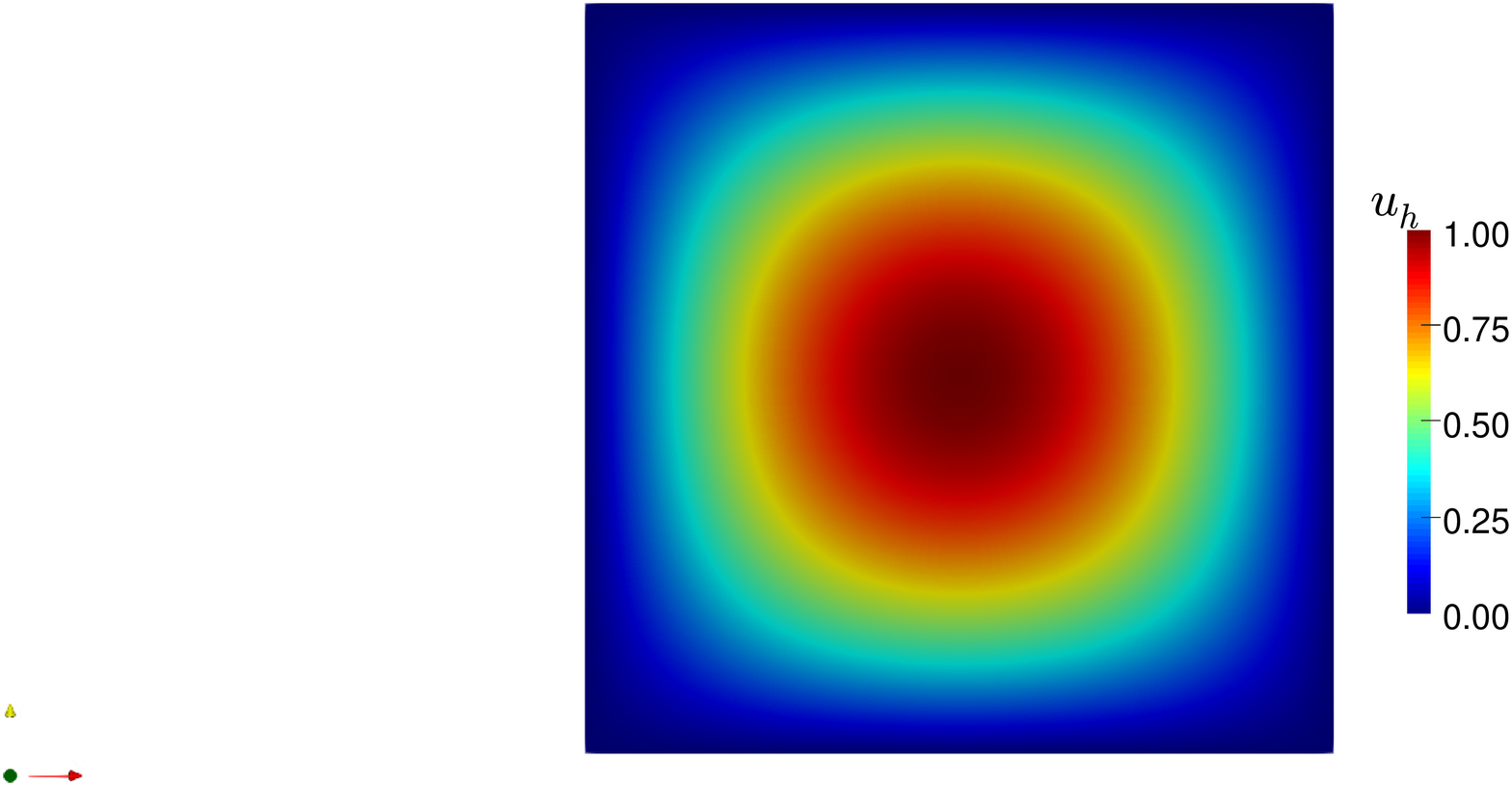}}\\
\subfloat[\label{testBc}]{\includegraphics[scale=0.16,trim = 445 100 325 110, clip=true]{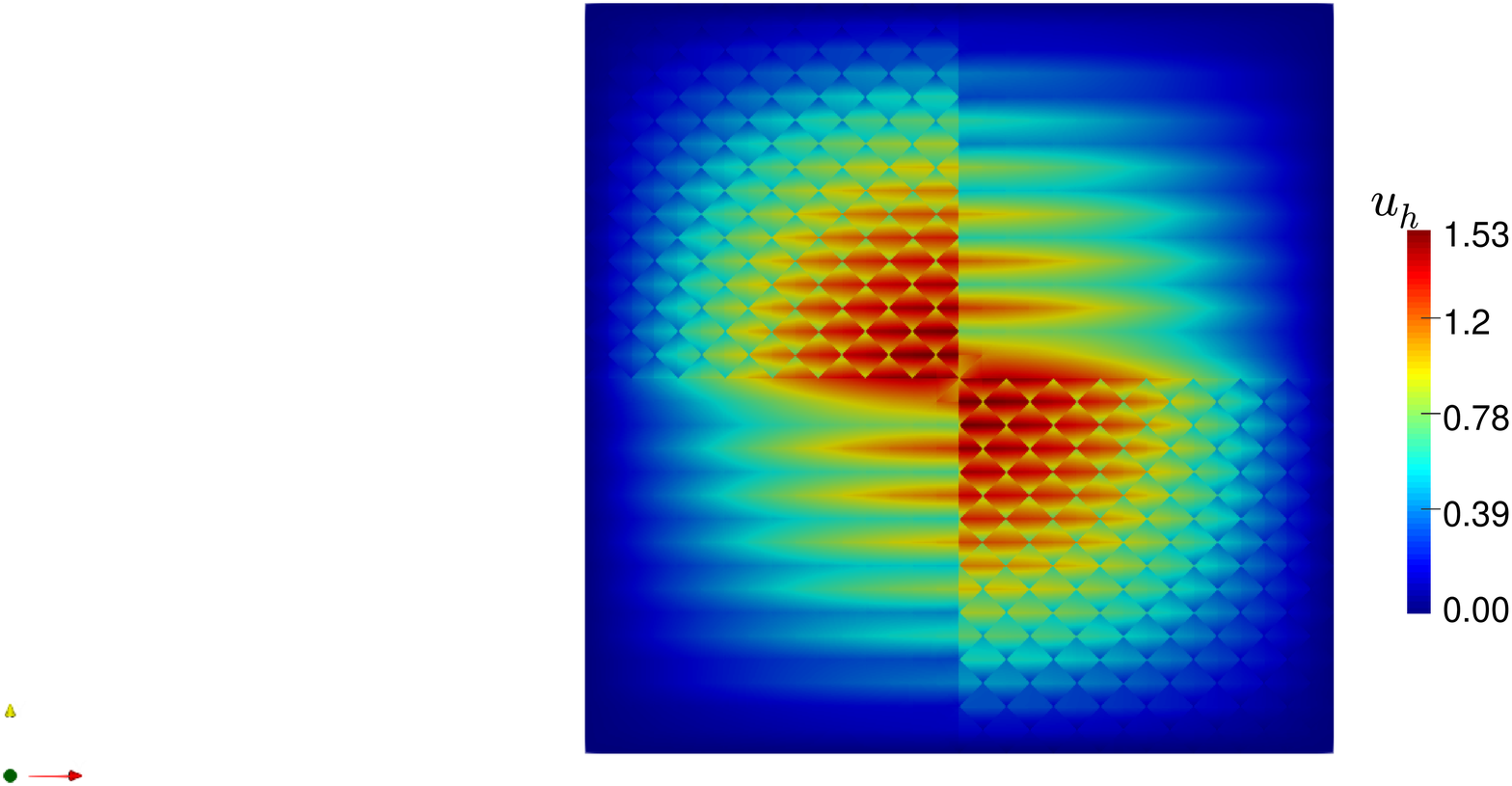}}
\subfloat[\label{testBd}]{\includegraphics[scale=0.16,trim = 445 100 325 110, clip=true]{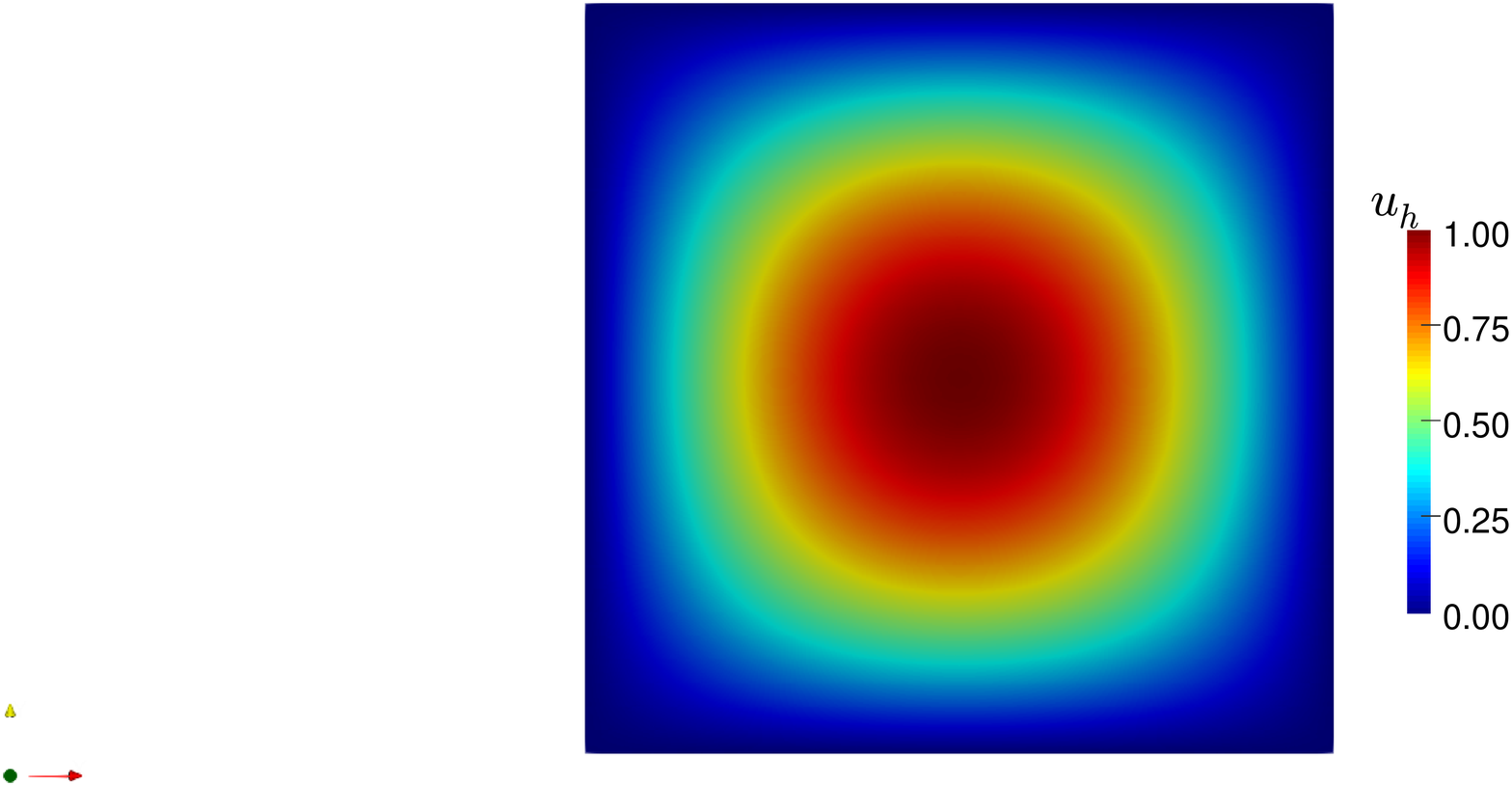}}\\
\subfloat[\label{testBe}]{\includegraphics[scale=0.16,trim = 445 100 325 110, clip=true]{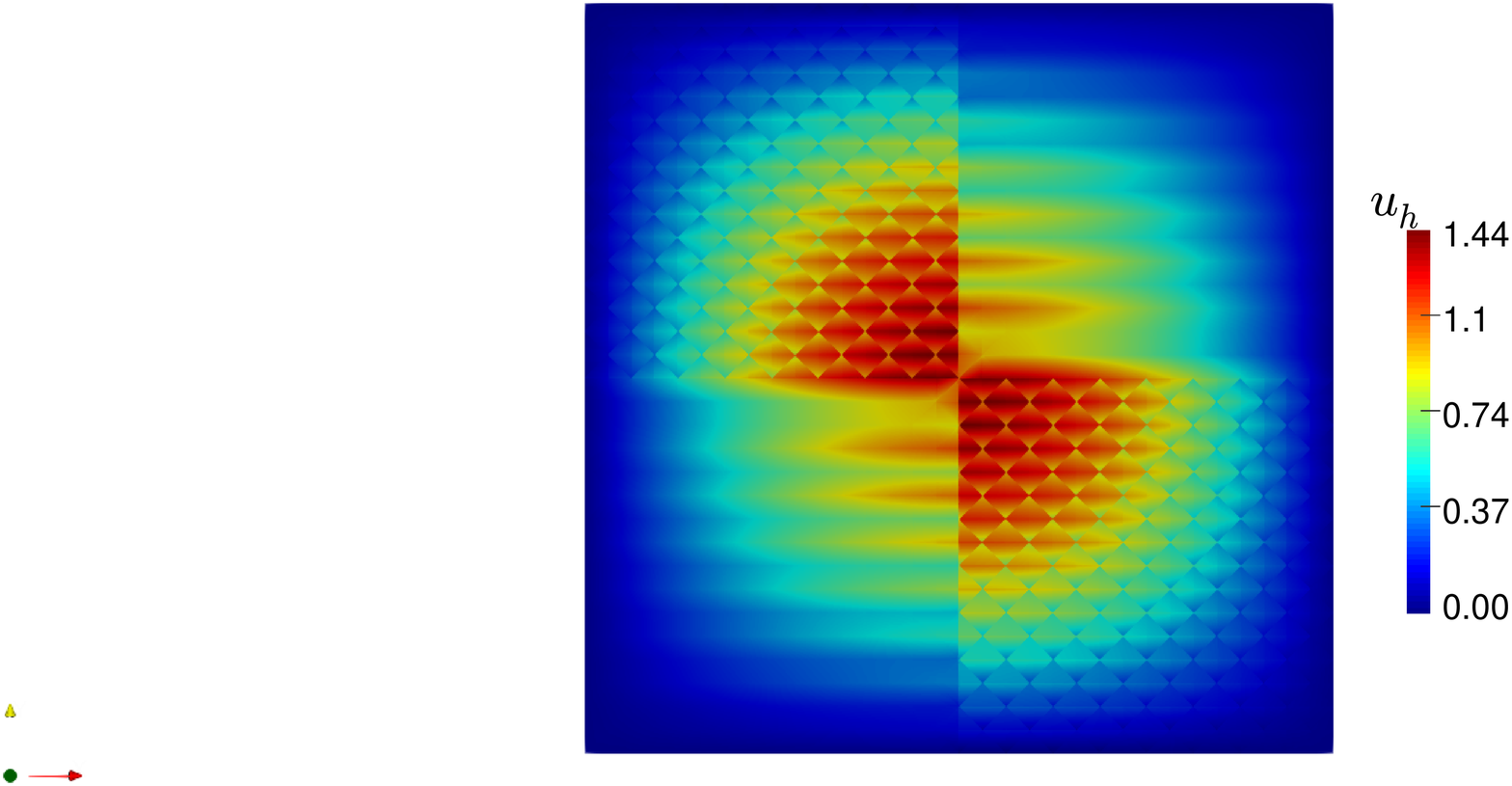}}
\subfloat[\label{testBf}]{\includegraphics[scale=0.16,trim = 445 100 325 110, clip=true]{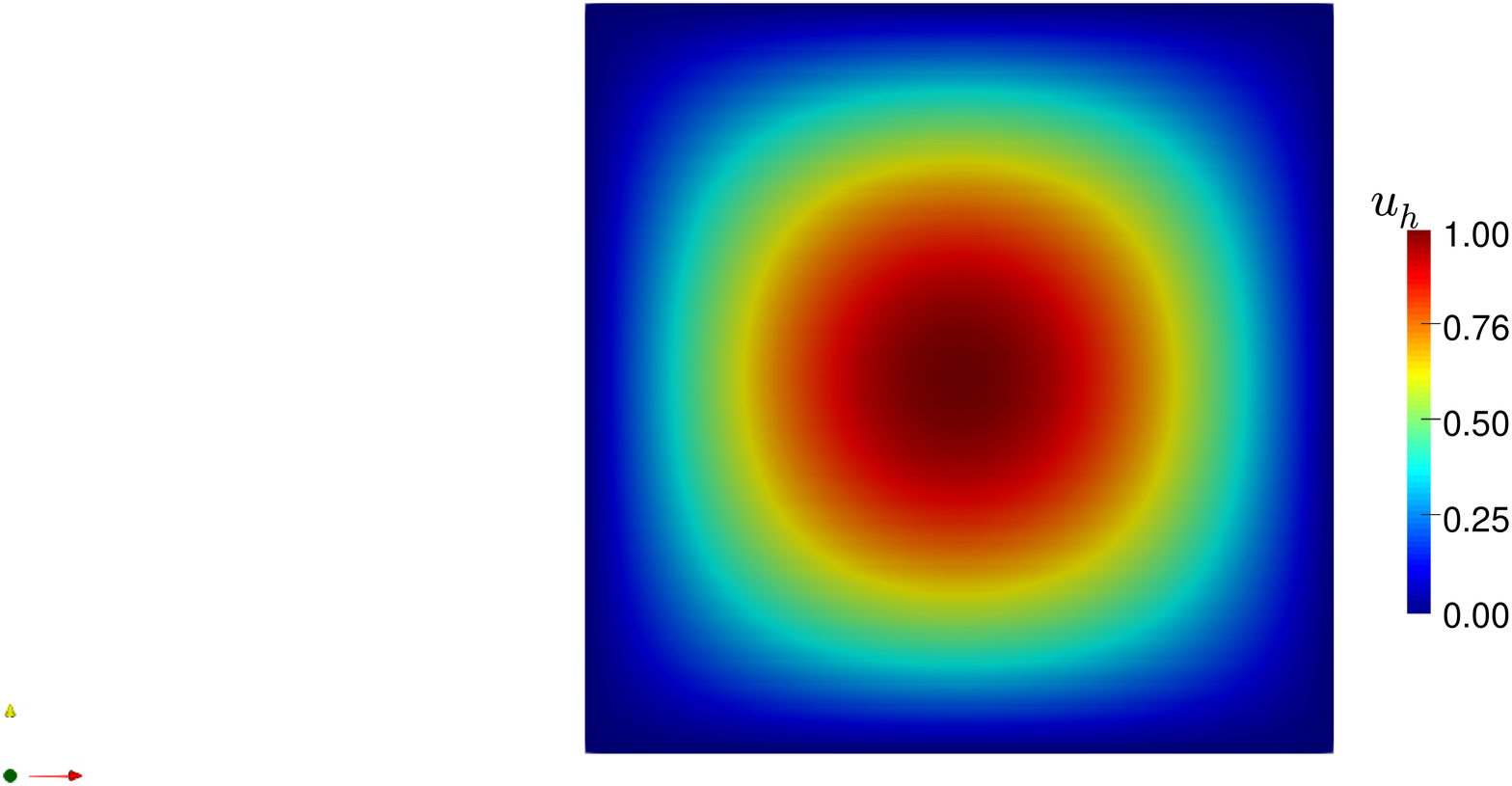}}
\caption{Test B: from the top to the bottom: representation of the discrete solution $\du$ obtained by the H-IIP (a-b), H-NIP (c-d) and H-SIP (e-f) schemes, respectively, on the structured triangular mesh ($h=1/32$). In the left images (a-c-e), the parameter $\kappa_{\e{},\f{}}$ in \eqref{penalty_term} is chosen as $\kappa_{\e{},\f{}}\eqbydef1$, and in the right images (b-d-f), $\kappa_{\e{},\f{}}\eqbydef\vect{n}_{\e{},\f{}}\bkap_{\e{}}\vect{n}_{\e{},\f{}}$.}
\label{Test B}
\end{figure}
%\begin{table}[!h]
%\centering
%\small
%\caption{Test B: comparison of H-IP methods using a piecewise linear approximation ($\du\in\Pk{1}{\Th}$) and two distinct definitions of the coefficient $\kappa_{\f{}}$ for highly anisotropic and heterogeneous media ($\lambda = 10^{-3}$). In Case 1, $\kappa_{\f{}}\eqbydef1$, and in Case 2, $\kappa_{\f{}}\eqbydef\vect{n}_{\f{}}\bkap_{\e{}}\vect{n}_{\f{}}$.}
%\label{testB_table}
%\begin{tabular}{|c|cc|cc|cc|}
%\hline
%& \multicolumn{2}{c|}{H-IIP} & \multicolumn{2}{c|}{H-NIP} & %\multicolumn{2}{c|}{H-SIP}  \\
%& Case 1 & Case 2 & Case 1 & Case 2 & Case 1 & Case 2 \\
%\hline
%$\min(\du)$ & $1.54e-03$ & $2.14e-03$ & $2.79e-03$ & $2.09e-03$ & $2.68e-03$ & $2.12e-03$ \\
%$\max(\du)$ & $1.25e+00$ & $9.97e-01$ & $1.33e+00$ & $9.97e-01$ & $1.30e+00$ & $9.97e-01$ \\
%$\norm{u-\du}{0}{\Th}$ & $1.31e-01$ & $4.33e-04$ & $1.39e-01$ & $5.43e-04$ & $1.21e-01$ & $1.96e-03$ \\
%\hline
%\end{tabular}
%\end{table}

\subsection{Test C: Influence of the parameter $\alpha_{0}$}
\label{Test C}

To conclude the sequence of numerical tests, we analyze the influence of the parameter $\alpha_0$ on the convergence of the H-SIP method for $\bkap$-orthogonal grids only. For simplicity, we consider the same test case as Test B, \eqref{Numerical_infl_Kf}, and we set two values of the parameter $\lambda$: (i) $\lambda=1$ for a homogeneous and isotropic media and (ii) $\lambda=0.1$  for a heterogeneous and anisotropic media. We plot the computed $\LL{2}$-error of the H-SIP method for a wide range of values of the parameter $\alpha_0$---\ie $1\leq\alpha_0\leq 6$---using a uniform square mesh ($h=1/32$). The analysis is done for polynomial degrees $1\leq k\leq 4$, but the results are presented for $k=1,2$ only. Analyzing \figref{gamma_dependency}, we observe that there exists an optimal value of the parameter $\alpha_0\eqbydef\alpha_{\textrm{opt}}$ that minimizes the $\LL{2}$-error of the scheme. In the context of $\bkap$-orthogonal grids, this optimal value ($\alpha_{\textrm{opt}}=2$) is insensitive to the mesh form, the mesh size $h$, the polynomial degree $k$, and the heterogeneity and/or anisotropy of the media $\lambda$. A history of the convergence of the H-SIP method using $\alpha_{\textrm{opt}}=2$ is then given in \figref{gamma_dependency}, and we note the surprising superconvergence of $\du$ $(k+2)$ in the discrete $\LL{2}$-norm obtained without any postprocessing. We emphasize that the \textit{superconvergence} property is not achieved for any triangular mesh or any value of the parameter $\epsilon\neq 1$, even using the optimal parameter $\alpha_{\textrm{opt}}$ in $\eqref{penalty_term}$.\par
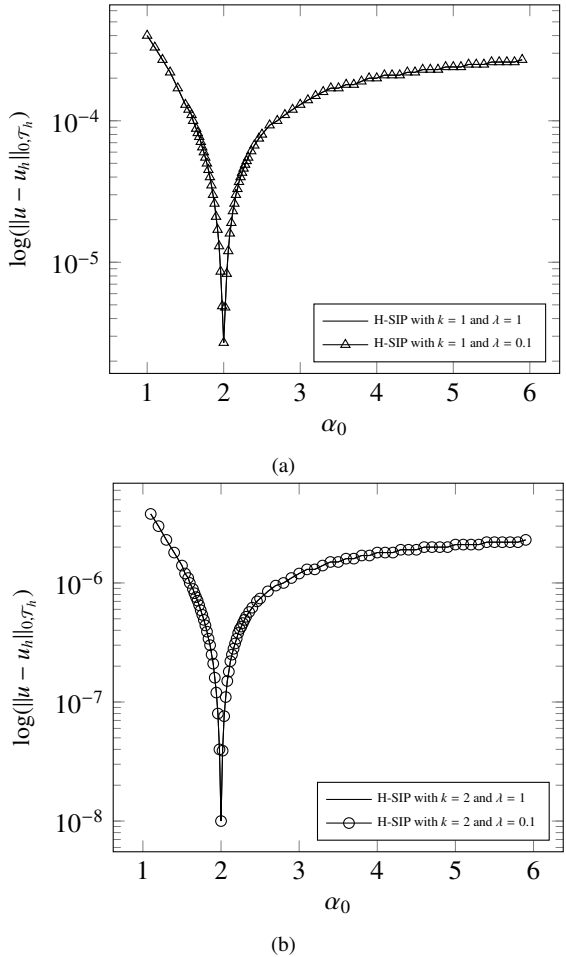
\begin{figure}[!ht]
\centering
\subfloat[]{\begin{tikzpicture}[scale=1.0]
	\begin{semilogyaxis}[
	     xlabel=$\alpha_0$, ylabel={\small{$\log(\norm{u-\du}{0}{\Th})$}}, legend cell align=left, legend pos = south east, font=\normalsize]
	\addplot[color=black] coordinates {
      (1.00,4.0e-04) (1.10,3.3e-04) (1.20,2.7e-04) (1.30,2.2e-04) (1.40,1.7e-04) (1.50,1.3e-04) (1.54,1.2e-04) (1.58,1.1e-04) (1.60,1.0e-04) (1.64,8.8e-05) (1.66,8.2e-05) (1.68,7.7e-05) (1.70,7.1e-05) (1.72,6.5e-05) (1.74,6.0e-05) (1.76,5.5e-05) (1.78,5.0e-05) (1.80,4.5e-05) (1.82,4.0e-05) (1.84,3.5e-05) (1.86,3.0e-05) (1.88,2.6e-05) (1.90,2.1e-05) (1.92,1.7e-05) (1.94,1.3e-05) (1.96,8.6e-06) (1.98,4.9e-06) (2.00,2.7e-06) (2.02,4.8e-06) (2.04,8.3e-06) (2.06,1.2e-05) (2.08,1.6e-05) (2.10,1.9e-05) (2.12,2.3e-05) (2.14,2.6e-05) (2.16,3.0e-05) (2.18,3.3e-05) (2.20,3.7e-05) (2.22,4.0e-05) (2.24,4.3e-05) (2.26,4.6e-05) (2.28,4.9e-05) (2.30,5.2e-05) (2.32,5.5e-05) (2.36,6.1e-05) (2.40,6.7e-05) (2.46,7.5e-05) (2.50,8.0e-05) (2.60,9.3e-05) (2.70,1.0e-04) (2.80,1.1e-04) (2.90,1.2e-04) (3.00,1.3e-04) (3.10,1.4e-04) (3.20,1.5e-04) (3.30,1.6e-04) (3.40,1.7e-04) (3.50,1.7e-04) (3.60,1.8e-04) (3.70,1.8e-04) (3.80,1.9e-04) (3.90,2.0e-04) (4.00,2.0e-04) (4.10,2.1e-04) (4.20,2.1e-04) (4.30,2.1e-04) (4.40,2.2e-04) (4.50,2.2e-04) (4.60,2.3e-04) (4.70,2.3e-04) (4.80,2.3e-04) (4.90,2.4e-04) (5.00,2.4e-04) (5.10,2.4e-04) (5.20,2.5e-04) (5.30,2.5e-04) (5.40,2.5e-04) (5.50,2.6e-04) (5.60,2.6e-04) (5.70,2.6e-04) (5.80,2.6e-04) (5.90,2.7e-04) 
	  };
	\addplot[color=black,mark=triangle] coordinates {
	 (1.00,4.0e-04) (1.10,3.3e-04) (1.20,2.7e-04) (1.30,2.2e-04) (1.40,1.7e-04) (1.50,1.3e-04) (1.54,1.2e-04) (1.58,1.1e-04) (1.60,1.0e-04) (1.64,8.8e-05) (1.66,8.2e-05) (1.68,7.7e-05) (1.70,7.1e-05) (1.72,6.5e-05) (1.74,6.0e-05) (1.76,5.5e-05) (1.78,5.0e-05) (1.80,4.5e-05) (1.82,4.0e-05) (1.84,3.5e-05) (1.86,3.0e-05) (1.88,2.6e-05) (1.90,2.1e-05) (1.92,1.7e-05) (1.94,1.3e-05) (1.96,8.6e-06) (1.98,4.9e-06) (2.00,2.7e-06) (2.02,4.8e-06) (2.04,8.3e-06) (2.06,1.2e-05) (2.08,1.6e-05) (2.10,1.9e-05) (2.12,2.3e-05) (2.14,2.6e-05) (2.16,3.0e-05) (2.18,3.3e-05) (2.20,3.7e-05) (2.22,4.0e-05) (2.24,4.3e-05) (2.26,4.6e-05) (2.28,4.9e-05) (2.30,5.2e-05) (2.32,5.5e-05) (2.36,6.1e-05) (2.40,6.7e-05) (2.46,7.5e-05) (2.50,8.0e-05) (2.60,9.3e-05) (2.70,1.0e-04) (2.80,1.1e-04) (2.90,1.2e-04) (3.00,1.3e-04) (3.10,1.4e-04) (3.20,1.5e-04) (3.30,1.6e-04) (3.40,1.7e-04) (3.50,1.7e-04) (3.60,1.8e-04) (3.70,1.8e-04) (3.80,1.9e-04) (3.90,2.0e-04) (4.00,2.0e-04) (4.10,2.1e-04) (4.20,2.1e-04) (4.30,2.1e-04) (4.40,2.2e-04) (4.50,2.2e-04) (4.60,2.3e-04) (4.70,2.3e-04) (4.80,2.3e-04) (4.90,2.4e-04) (5.00,2.4e-04) (5.10,2.4e-04) (5.20,2.5e-04) (5.30,2.5e-04) (5.40,2.5e-04) (5.50,2.6e-04) (5.60,2.6e-04) (5.70,2.6e-04) (5.80,2.6e-04) (5.90,2.7e-04) 
	 };
    %\legend{\small{$k=1$ and $\lambda=1$, $k=1$ and $\lambda=0.1$}}
    \addlegendentry{\tiny{H-SIP with $k=1$ and $\lambda=1$}}
    \addlegendentry{\tiny{H-SIP with $k=1$ and $\lambda=0.1$}}
	\end{semilogyaxis}
\end{tikzpicture} } \quad
\subfloat[]{\begin{tikzpicture}[scale=1.0]
	\begin{semilogyaxis}[
	     xlabel=$\alpha_0$, ylabel=\small{$\log(\norm{u-\du}{0}{\Th})$}, legend cell align=left, legend pos = south east, font=\normalsize]
	 
	\addplot[color=black] coordinates {
	 (1.10,3.8e-06) (1.20,3.0e-06) (1.30,2.3e-06) (1.40,1.8e-06) (1.50,1.4e-06) (1.54,1.2e-06) (1.58,1.1e-06) (1.60,1.0e-06) (1.64,8.9e-07) (1.66,8.2e-07) (1.68,7.6e-07) (1.70,7.1e-07) (1.72,6.5e-07) (1.74,5.9e-07) (1.76,5.4e-07) (1.78,4.9e-07) (1.80,4.4e-07) (1.82,3.9e-07) (1.84,3.4e-07) (1.86,3.0e-07) (1.88,2.5e-07) (1.90,2.1e-07) (1.92,1.6e-07) (1.94,1.2e-07) (1.96,8.0e-08) (1.98,4.0e-08) (2.00,1.0e-08) (2.02,3.9e-08) (2.04,7.6e-08) (2.06,1.1e-07) (2.08,1.5e-07) (2.10,1.8e-07) (2.12,2.2e-07) (2.14,2.5e-07) (2.16,2.8e-07) (2.18,3.1e-07) (2.20,3.4e-07) (2.22,3.8e-07) (2.24,4.0e-07) (2.26,4.3e-07) (2.28,4.6e-07) (2.30,4.9e-07) (2.32,5.2e-07) (2.36,5.7e-07) (2.40,6.2e-07) (2.46,7.0e-07) (2.50,7.4e-07) (2.60,8.5e-07) (2.70,9.5e-07) (2.80,1.0e-06) (2.90,1.1e-06) (3.00,1.2e-06) (3.10,1.3e-06) (3.20,1.3e-06) (3.30,1.4e-06) (3.40,1.5e-06) (3.50,1.5e-06) (3.60,1.6e-06) (3.70,1.6e-06) (3.80,1.7e-06) (3.90,1.7e-06) (4.00,1.8e-06) (4.10,1.8e-06) (4.20,1.8e-06) (4.30,1.9e-06) (4.40,1.9e-06) (4.50,1.9e-06) (4.60,2.0e-06) (4.70,2.0e-06) (4.80,2.0e-06) (4.90,2.0e-06) (5.00,2.1e-06) (5.10,2.1e-06) (5.20,2.1e-06) (5.30,2.1e-06) (5.40,2.2e-06) (5.50,2.2e-06) (5.60,2.2e-06) (5.70,2.2e-06) (5.80,2.2e-06) (5.90,2.3e-06) 
	  };
	
	\addplot[color=black,mark=o] coordinates {
	 (1.10,3.8e-06) (1.20,3.0e-06) (1.30,2.3e-06) (1.40,1.8e-06) (1.50,1.4e-06) (1.54,1.2e-06) (1.58,1.1e-06) (1.60,1.0e-06) (1.64,8.9e-07) (1.66,8.2e-07) (1.68,7.6e-07) (1.70,7.1e-07) (1.72,6.5e-07) (1.74,5.9e-07) (1.76,5.4e-07) (1.78,4.9e-07) (1.80,4.4e-07) (1.82,3.9e-07) (1.84,3.4e-07) (1.86,3.0e-07) (1.88,2.5e-07) (1.90,2.1e-07) (1.92,1.6e-07) (1.94,1.2e-07) (1.96,8.0e-08) (1.98,4.0e-08) (2.00,1.0e-08) (2.02,3.9e-08) (2.04,7.6e-08) (2.06,1.1e-07) (2.08,1.5e-07) (2.10,1.8e-07) (2.12,2.2e-07) (2.14,2.5e-07) (2.16,2.8e-07) (2.18,3.1e-07) (2.20,3.4e-07) (2.22,3.8e-07) (2.24,4.1e-07) (2.26,4.3e-07) (2.28,4.6e-07) (2.30,4.9e-07) (2.32,5.2e-07) (2.36,5.7e-07) (2.40,6.2e-07) (2.46,7.0e-07) (2.50,7.4e-07) (2.60,8.5e-07) (2.70,9.5e-07) (2.80,1.0e-06) (2.90,1.1e-06) (3.00,1.2e-06) (3.10,1.3e-06) (3.20,1.3e-06) (3.30,1.4e-06) (3.40,1.5e-06) (3.50,1.5e-06) (3.60,1.6e-06) (3.70,1.6e-06) (3.80,1.7e-06) (3.90,1.7e-06) (4.00,1.8e-06) (4.10,1.8e-06) (4.20,1.8e-06) (4.30,1.9e-06) (4.40,1.9e-06) (4.50,1.9e-06) (4.60,2.0e-06) (4.70,2.0e-06) (4.80,2.0e-06) (4.90,2.0e-06) (5.00,2.1e-06) (5.10,2.1e-06) (5.20,2.1e-06) (5.30,2.1e-06) (5.40,2.2e-06) (5.50,2.2e-06) (5.60,2.2e-06) (5.70,2.2e-06) (5.80,2.2e-06) (5.90,2.3e-06) 
	 };
    %\legend{\small{$k=2$ and $\lambda=1$, $k=2$ and $\lambda=0.1$}}
    \addlegendentry{\tiny{H-SIP with $k=2$ and $\lambda=1$}}
    \addlegendentry{\tiny{H-SIP with $k=2$ and $\lambda=0.1$}}
	\end{semilogyaxis}
\end{tikzpicture}}
\caption{Test C: the $\LL{2}$-error of the H-SIP method vs. $\alpha_0$ for a uniform square mesh using piecewise linear (a) and quadratic (b) approximations.}
\label{gamma_dependency}
\end{figure}
\begin{table}[!ht]
\centering
\small
\caption{Test C: history of convergence $\norm{u-\du}{0}{\Th}$ (vs. $h$) of the H-SIP method using the optimal parameter $\alpha_{\textrm{opt}}$ on uniform square meshes}
\label{tableA}
\begin{tabular}{|c|cccc|}
\hline
& \multicolumn{4}{c|}{H-SIP ($k=1$)} \\
& \multicolumn{2}{c}{$\lambda=1$} & \multicolumn{2}{c|}{$\lambda=0.1$} \\
\hline
$h^{-1}$ & $\norm{u-\du}{0}{\Th}$ & ECR & $\norm{u-\du}{0}{\Th}$ & ECR \\
\hline
%\multirow{4}{*}{1} 
$8$ & $1.7e-04$ & -- & $1.7e-04$ & -- \\
$16$ & $2.1e-05$ & \textcolor{red}{$3.00$} & $2.1e-05$ & \textcolor{red}{$3.00$} \\
$32$ & $2.7e-06$ & \textcolor{red}{$3.00$} & $2.7e-06$ & \textcolor{red}{$3.00$} \\
$64$ & $3.4e-07$ & \textcolor{red}{$3.00$} & $3.4e-07$ & \textcolor{red}{$3.00$} \\
\hline
& \multicolumn{4}{c|}{H-SIP ($k=2$)} \\
\hline
%$h^{-1}$ & $\norm{u-\du}{0}{\Th}$ & ECR & $\norm{u-\du}{0}{\Th}$ & ECR  \\
%\hline
%\multirow{4}{*}{1} 
$8$  & $2.6e-06$ & -- & $2.6e-06$ & -- \\
$16$ & $1.6e-07$ & \textcolor{red}{$3.99$} & $1.6e-07$ & \textcolor{red}{$3.99$} \\
$32$ & $1.0e-08$ & \textcolor{red}{$4.00$} & $1.0e-08$ & \textcolor{red}{$4.00$} \\
$64$ & $6.4e-10$ & \textcolor{red}{$4.00$} & $6.4e-10$ & \textcolor{red}{$4.00$} \\
\hline
\end{tabular}
\end{table}

\section{Conclusion}

We derive improved \textit{a priori} error estimates of families of hybridizable interior penalty discontinuous Galerkin methods using a variable penalty to solve highly anisotropic diffusion problems. The convergence analysis highlights the $h^{\delta}$-dependency of the coercivity condition and the boundedness requirement that strongly impacts the derived error estimates in terms of both energy- and $\LL{2}$-norms. The optimal convergence of the energy-norm is proven for any penalty parameter $\delta\geq 0$ and $\epsilon\in\{0,\pm 1\}$. The situation is somewhat different in $\LL{2}$, and distinctive features can be found between the three schemes. Indeed, the symmetric method theoretically converges optimally if $\delta\geq 0$, and non-symmetric variants converge only if $\delta\geq 2$ independently of the polynomial parity. All of these estimates are corroborated by numerical evidence. Notably, the superconvergence of the H-SIP scheme is achieved for $\bkap$-orthogonal grids without any postprocessing but only if an appropriate $\alpha_0$ is selected.

\section*{Acknowledgments}
By convention, the names of the authors are listed in alphabetical order. The third author is grateful to Sander Rhebergen for his invitation to the Department of Applied Mathematics at the University of Waterloo (UW) in April 2019. He also would like to thank B\'eatrice Rivi\`ere at Rice University for her insightful suggestions and remarks concerning \textit{a priori} error estimates. Our fruitful discussions of (hybridizable) interior penalty methods using superpenalties were the source of inspiration of the present work.

% Authors must disclose all relationships or interests that 
% could have direct or potential influence or impart bias on 
% the work: 
%
% \section*{Conflict of interest}
%
% The authors declare that they have no conflict of interest.

% BibTeX users please use one of
%\bibliographystyle{spbasic}      % basic style, author-year citations
%\bibliographystyle{spmpsci}      % mathematics and physical sciences
%\bibliographystyle{spphys}       % APS-like style for physics
%\bibliography{}   % name your BibTeX data base

\bibliography{references.bib}

% Non-BibTeX users please use
%\begin{thebibliography}{}
%
% and use \bibitem to create references. Consult the Instructions
% for authors for reference list style.
%
%\bibitem{RefJ}
% Format for Journal Reference
%Author, Article title, Journal, Volume, page numbers (year)
% Format for books
%\bibitem{RefB}
%Author, Book title, page numbers. Publisher, place (year)
% etc
%\end{thebibliography}

\end{document}